\newtheorem{prop}{Proposition}
\newtheorem{theorem}{Theorem}
\newtheorem{lemma}{Lemma}
\newcommand{\R}{\mathbb{R}}
\newcommand{\N}{\mathbb{N}}
\newcommand{\PP}{\mathbb{P}}
\newcommand{\E}{{\mathbb{E}}}
\newcommand{\defi}{\equiv} 
\newcommand{\de}{\delta}
\newcommand{\vare}{\varepsilon}
\newcommand{\beq}{\begin{equation}} 
\newcommand{\eeq}{\end{equation}} 
\newcommand{\bea}{\begin{aligned}}
\newcommand{\eea}{\end{aligned}}
\newcommand{\bdm}{\begin{displaymath}}
\newcommand{\edm}{\end{displaymath}}
\newcommand{\barr}{\begin{array}}
\newcommand{\earr}{\end{array}}
\newcommand{\ben}{\begin{enumerate}}
\newcommand{\een}{\end{enumerate}}
\newcommand{\bde}{\begin{description}}
\newcommand{\ede}{\end{description}}
\author{Marius A. Schmidt \\  J.W. Goethe-Universit\"at Frankfurt, Germany \\
mschmidt@math.uni-frankfurt.de}
\title{A SIMPLE PROOF OF THE DPRZ-THEOREM \\
FOR 2d COVER TIMES.}
\begin{document}
\maketitle
The {\it $\varepsilon$-cover time} of the two dimensional unit torus $\mathbb{T}_2$ by Brownian motion (BM) is the time for the process to come within distance  $\vare> 0$ from any point. Denoting by $T_{\varepsilon}\left(x\right)$ the first time BM hits the $\varepsilon$-ball centered in $x\in\mathbb{T}_2$, the $\vare$-cover time is thus given by \beq T_{\varepsilon} \equiv \sup_{x\in \mathbb{T}_2} T_{\varepsilon}\left(x\right). \eeq
The purpose of these short notes is to provide a concise proof of a celebrated theorem by Dembo, Peres, Rosen and Zeitouni, DPRZ for short, which settles the leading order in the small-$\vare$ regime:

\begin{theorem} (The DPRZ-Theorem, \cite{DPRZ}) \label{main_thrm} Almost surely, 
\beq
\lim_{\varepsilon \downarrow 0} \frac{T_{\varepsilon}}{\left(\ln \varepsilon \right)^2} = \frac{2}{\pi}.
\eeq
\end{theorem}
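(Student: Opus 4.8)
The plan is to establish the two matching one‑sided bounds $\limsup_{\vare\downarrow0}T_\vare/(\ln\vare)^2\le 2/\pi$ and $\liminf_{\vare\downarrow0}T_\vare/(\ln\vare)^2\ge 2/\pi$, both almost surely; each is proved along the sequence $\vare_j=e^{-j}$ and then transferred to general $\vare$ by monotonicity of $\vare\mapsto T_\vare$ together with $(\ln\vare_j)^2\sim(\ln\vare_{j+1})^2$. The common engine is an excursion decomposition around a fixed $x\in\mathbb{T}_2$: with an outer radius $r_0$ of order one and geometric scales $r_k=r_0e^{-k}$ down to $r_n=\vare$ (so $n\sim\ln(1/\vare)$), split the path run up to time $t$ into excursions of $|B_\cdot-x|$ between consecutive circles $\partial D(x,r_{k-1})$ and $\partial D(x,r_k)$. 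Since $\ln|B_\cdot-x|$ is a driftless time‑changed one‑dimensional Brownian motion, whether one macroscopic excursion around $x$ ever descends all the way to $\partial D(x,\vare)$ is a plain gambler's‑ruin event of probability of order $1/n$, while the number of such excursions performed by time $t$ is controlled via the occupation‑time / logarithmic Green's‑function structure of the torus, $g(x,y)=\tfrac1\pi\ln\tfrac1{|x-y|}+O(1)$ — this is where the factor $1/\pi$ enters. Combining these two inputs with the Ray--Knight description of how the level‑by‑level excursion counts propagate (a critical branching‑type recursion) yields the sharp one‑point estimate
\beq
\PP\bigl(T_\vare(x)>\lambda(\ln\vare)^2\bigr)=\vare^{\,\pi\lambda+o(1)}\qquad(\vare\downarrow0),
\eeq
uniformly in $x$, together with a matching upper bound on the two‑point probability $\PP\bigl(T_\vare(x)>t,\,T_\vare(y)>t\bigr)$ which, for $|x-y|\ge\vare$, factorizes down to the scale $|x-y|$ and becomes conditionally independent below it.

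For the upper bound, fix $\delta>0$ and set $t=(\tfrac2\pi+\delta)(\ln\vare)^2$. Take an $(\vare/2)$‑net of $O(\vare^{-2})$ points; if each net point has been approached within $\vare/2$ by time $t$ then the whole torus is $\vare$‑covered, so $T_\vare\le\max_{x\in\mathrm{net}}T_{\vare/2}(x)$. By the one‑point estimate the expected number of net points still uncovered at time $t$ is at most $\vare^{-2}\cdot\vare^{\,2+\pi\delta+o(1)}=\vare^{\,\pi\delta+o(1)}\to0$; hence by Markov's inequality and Borel--Cantelli along $\vare_j=e^{-j}$, almost surely all net points are covered by time $(\tfrac2\pi+\delta)(\ln\vare_j)^2$ for $j$ large. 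Interpolating in $\vare$ and letting $\delta\downarrow0$ gives the claimed $\limsup$.

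For the lower bound, fix $\delta>0$, set $t=(\tfrac2\pi-\delta)(\ln\vare)^2$, and let $N$ be the number of points of an $\vare$‑net not $\vare$‑covered by time $t$, so that $\E[N]=\vare^{-\pi\delta+o(1)}\to\infty$. A plain second moment fails, since two nearby net points are strongly positively correlated through the excursions at scales above their separation; the remedy is a truncated second moment. Replace $N$ by the count $\tilde N$ of net points that are uncovered \emph{and} for which the number of excursions reaching each intermediate scale $r_k$ around the point stays within a fixed multiplicative window of its typical value, so that uncoveredness accrues at a steady rate across scales rather than through an early fluke. The truncation costs only a factor $\vare^{o(1)}$, so still $\E[\tilde N]\to\infty$, while the regularity constraint neutralizes the boost from the shared coarse‑scale excursions and forces $\E[\tilde N^2]\le C\,(\E[\tilde N])^2$ uniformly in $\vare$; the second‑moment inequality $\PP(\tilde N>0)\ge(\E[\tilde N])^2/\E[\tilde N^2]$ then gives $\PP(\tilde N>0)\ge 1/C>0$ for all small $\vare$. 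A further argument — splitting $\mathbb{T}_2$ into a growing number of well‑separated pieces whose local excursion structures are, conditionally on the (concentrated) total excursion count, asymptotically independent, and rerunning the same construction in each — boosts this to $\PP(T_\vare>t)\to1$ at a rate summable along $\vare_j=e^{-j}$. Borel--Cantelli and interpolation then give $\liminf T_\vare/(\ln\vare)^2\ge\tfrac2\pi-\delta$ a.s., and $\delta\downarrow0$ finishes. Combining the two bounds proves the theorem.

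I expect the genuine difficulty to sit in two places. First, extracting the \emph{sharp} constant $\pi$ in the one‑point estimate: this requires tracking, uniformly in $\vare$, the $1/\pi$ normalization of the torus Green's function and the mean time spent per macroscopic excursion, and controlling the errors in the Ray--Knight / gambler's‑ruin bookkeeping so tightly that the dependence on the reference scale $r_0$ cancels and all remaining slack disappears into the $o(1)$. Second, and principally, the lower bound's truncated second moment: one must choose the regularity window so that $\E[\tilde N]\to\infty$ and $\E[\tilde N^2]\le C(\E[\tilde N])^2$ hold simultaneously — exactly the point where the logarithmic correlation structure is most delicate — and then make the near‑independence across the many sub‑regions quantitative enough to push the probability to one at a Borel--Cantelli‑summable rate.
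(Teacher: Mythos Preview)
Your outline is correct in strategy --- it is, in effect, the original DPRZ proof --- but the paper takes a deliberately lighter route, and the differences are exactly the simplifications the paper advertises. Where you use $n\sim\ln(1/\vare)$ geometric scales and Ray--Knight-type bookkeeping, the paper works with a \emph{fixed finite} number $K$ of scales (large depending on $\delta$), turning the multiscale analysis into finitely many independent Cram\'er-type tail estimates for sums of geometric variables. Where you truncate by a two-sided window around typical excursion counts (the ``$n$-successful'' condition), the paper imposes only a one-sided upper bound $\mathcal N_l^x(\cdot)\le n(1-(l+1)/K)^2$ matching the $h$-transform profile of SRW conditioned not to reach $K$; this makes the one-point probability a product of explicit rate-function terms. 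Most importantly, where you get only $\PP(\tilde N>0)\ge 1/C$ and then need a separate boosting step (splitting $\mathbb T_2$ into near-independent regions) to reach a Borel--Cantelli-summable bound, the paper's two-point estimate via the ``REM approximation'' --- for $d(x,y)\in[r_{i+1},r_i]$, drop the $A^y_l$ constraints on the shared coarse scales $l\le i$ and use exact independence on the fine ones --- yields $\mathcal B/\mathcal A^2\le\vare^{c(\delta)}$ directly, so Paley--Zygmund already gives $\PP(T_\vare<\mathfrak t)\le\vare^{c}$ with no further work. Your route would go through, but every place you flag as the ``genuine difficulty'' is precisely what the paper's finitely-many-scales / one-sided-profile / REM-approximation package is designed to bypass.
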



\noindent A key idea in the DPRZ-approach is to relate hitting times of $\vare$-balls on $\mathbb{T}_2$ to excursion counts between circles of mesoscopic sizes around these balls \cite{ray}; the DPRZ-proof of the theorem goes then through an involved multiscale analysis in the form of a second moment computation with truncation. We take here a similar point of view  but with a number of twists which altogether lead to a considerable streamlining of the arguments. In particular, we implement the multiscale refinement of the second moment method emerged in the recent studies of Derrida's GREM and branching Brownian motion \cite{kistler}. This tool brings to the fore the {\it true} process of covering \cite{belius_kistler} with the help of minimal infrastructure only; it also efficiently replaces the delicate tracking of points which DPRZ refer to as 'n-successful', and requires the use of finitely many scales only. All these features simplify substantially the proof of the DPRZ-theorem.

We believe the route taken here also considerably streamlines the deep DPRZ-results on late and thin/thick points of BM  \cite{dembo}, and, what is perhaps more, it will be useful in the study of the {\it finer} properties. In fact, our approach carries over, {\it mutatis mutandis}, to these issues as well: when backed with \cite{belius_kistler}, the present notes suggest that in order to address lower order corrections, one "simply" needs to increase the number of scales. 

These notes are self-contained. Although, as mentioned, some key insights are taken from \cite{DPRZ}, no knowledge of the latter is assumed and detailed proofs to all statements are given. 

\section{The (new) road to the DPRZ-Theorem}\label{proof_sec}
We identify the unit torus $\mathbbm{T}_2$ with $\left[0,1\right) \times \left[0,1\right) \subset \R^2$, endowed with the metric $$ d_{\mathbb{T}_2}\left(x,y\right)= \min\left\{||x-y+\left(e_1,e_2\right)||: e_1,e_2\in \{-1,0,1\} \right\}.$$ 
We construct BM on $\mathbbm{T}_2$ by $W_t \defi \left(\hat{W}_1(t) \mod 1, \hat{W}_2(t) \mod 1\right)$, where $\hat{W}$ is standard BM on $\R^2$.  \\

\noindent By monotonicity of $T_\varepsilon$ and Borel-Cantelli Lemma, the DPRZ-Theorem steadily follows from 

\begin{theorem} \label{one_t}
For $\delta>0$ small enough there exist constants $c\left(\delta\right),c'\left(\delta\right)>0$ such that the following bounds hold for any $0<\varepsilon<c'\left(\delta\right)$:
\begin{itemize}
\item[1)] (upper bound) 
\beq \PP \left(T_\varepsilon > \left(1+\delta\right)\frac{2}{\pi}\left(\ln\varepsilon\right)^2 \right)\leq \varepsilon^{c\left(\delta\right)}, \eeq
\item[2)] (lower bound) \beq \PP \left(T_\varepsilon < \left(1-\delta\right)\frac{2}{\pi}\left(\ln\varepsilon\right)^2 \right)\leq \varepsilon^{c\left(\delta\right)}\,. \eeq
\end{itemize}
\end{theorem}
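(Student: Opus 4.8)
\textit{The plan.} I would derive both parts from one sharp estimate on the probability that a single $\varepsilon$-ball is not hit by a given time, and then apply a first moment (Markov) bound for part~1) and a truncated second moment (Paley--Zygmund) bound for part~2). That single-point estimate is obtained by decomposing the path of $W$ near a point into excursions across a \emph{finite} hierarchy of mesoscopic circles and comparing the successive excursion counts to a critical branching process, in the spirit of \cite{ray,belius_kistler}.

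\textit{Set-up and the single-point estimate.} Fix an integer $n=n(\delta)$, a small constant $r_*$, and radii $R_k=r_*(\varepsilon/r_*)^{k/n}$, $k=0,\dots,n$, so $R_0=r_*$, $R_n=\varepsilon$ and consecutive circles sit at equal $\log$-distance $\tfrac1n\ln(1/\varepsilon)$. For $x\in\mathbb T_2$ and a time $t$, let $\mathcal M_k(x)$ be the number of excursions of $W$ reaching $\partial D(x,R_k)$ that are completed before $t$ (counted across the circles in the usual way). Three facts drive the argument: (i) $\{T_\varepsilon(x)>t\}$ coincides, up to negligible boundary effects, with $\{\mathcal M_n(x)=0\}$; (ii) by the strong Markov property and harmonicity of $\log|\cdot-x|$, conditionally on $\mathcal M_{k-1}(x)$ the count $\mathcal M_k(x)$ is, to leading order, a sum of $\mathcal M_{k-1}(x)$ i.i.d.\ (roughly geometric, mean one) offspring, so $(\mathcal M_k(x))_k$ is an approximately \emph{critical} branching process --- a squared Bessel process of dimension zero in the scale variable; (iii) a law of large numbers on the torus, together with a Green's function computation on $\mathbb T_2$ fixing the constants, shows that the root count $\mathcal M_0(x)$ is of order $n\ln(1/\varepsilon)$ when $t=a\tfrac2\pi(\ln\varepsilon)^2$, with all $r_*$-dependent quantities cancelling. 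Feeding (ii)--(iii) into the classical extinction asymptotics of a critical branching process (extinction within $n$ generations from one ancestor $\asymp 1/n$) gives, uniformly in $x$,
\beq\label{eq:single}
\PP\Big(T_\varepsilon(x)>a\tfrac2\pi(\ln\varepsilon)^2\Big)=\varepsilon^{\,2a+o(1)}\qquad(\varepsilon\downarrow0),
\eeq
and one sees that a \emph{fixed} $n$ already suffices for \eqref{eq:single}; a larger number of scales will be needed only to make the truncation below effective (and, as the authors remark, for finer corrections).

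\textit{The two bounds.} For part~1), discretise $\mathbb T_2$ by a grid $\mathcal G$ of $\asymp\varepsilon^{-2}$ points fine enough that every point is within $\varepsilon/2$ of a grid point; by continuity of $W$, $\{T_\varepsilon>t\}\subset\bigcup_{x\in\mathcal G}\{T_{\varepsilon/2}(x)>t\}$ up to a null event, so \eqref{eq:single} with $a=1+\delta$ and Markov's inequality give $\PP(T_\varepsilon>(1+\delta)\tfrac2\pi(\ln\varepsilon)^2)\le\varepsilon^{-2}\cdot\varepsilon^{\,2(1+\delta)+o(1)}\le\varepsilon^{\,c(\delta)}$ for $\varepsilon<c'(\delta)$ --- only the lower bound on the extinction probability is used here, and even $n=1$ suffices. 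For part~2) take $a=1-\delta$ and study $\mathcal U=\{x\in\mathcal G:T_\varepsilon(x)>t\}$; then $\E|\mathcal U|=\varepsilon^{-2\delta+o(1)}\to\infty$ by \eqref{eq:single}, and $\{T_\varepsilon<t\}\subset\{\mathcal U=\emptyset\}$. For the second moment one restricts to $\mathcal U^\sharp\subset\mathcal U$, the points whose entire profile $(\mathcal M_k(x))_{k\le n}$ stays in a fixed window around the typical extinct critical trajectory descending from the root count to $0$ --- the device that replaces the ``$n$-successful'' points of \cite{DPRZ}, costing only $\varepsilon^{o(1)}$ in the first moment. For a pair $x,y$ separating between scales $R_k$ and $R_{k-1}$, the excursion structures around $x$ and $y$ coincide down to scale $k-1$ and are conditionally independent below it, so the window constraint forces $\PP(x,y\in\mathcal U^\sharp)$ to factorise up to a loss depending only on the number of shared scales; summing over the separation scale --- exactly where the scale-by-scale second moment method of \cite{kistler} enters --- yields $\E[|\mathcal U^\sharp|^2]\le C(\E|\mathcal U^\sharp|)^2$, hence $\PP(\mathcal U\neq\emptyset)\ge 1/C$ by Paley--Zygmund. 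To boost this constant to $1-\varepsilon^{c(\delta)}$, tile $\mathbb T_2$ by $\asymp\varepsilon^{-2\theta}$ sub-squares of side $\varepsilon^\theta$ with $0<\theta<\delta$ (so the analogous first moment in each sub-square still diverges), run the estimate inside each, and use the approximate independence of the excursion families feeding disjoint sub-squares --- so that $\{T_\varepsilon<(1-\delta)\tfrac2\pi(\ln\varepsilon)^2\}$ forces every sub-square to be fully covered --- to conclude $\PP(T_\varepsilon<(1-\delta)\tfrac2\pi(\ln\varepsilon)^2)\le(1-1/C)^{\,\asymp\varepsilon^{-2\theta}}\le\varepsilon^{\,c(\delta)}$.

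\textit{Main obstacle.} I expect the truncated second moment for nearby pairs to be the crux: one must verify simultaneously that the window truncation on the excursion profiles costs only $\varepsilon^{o(1)}$ in $\E|\mathcal U^\sharp|$ and that it damps the $x$--$y$ correlation enough for the sum over separation scales to converge, all while the critical-branching approximation --- valid only asymptotically, and only on genuinely mesoscopic scales --- stays under control. The attendant bookkeeping obstacle is to carry the constant $\tfrac2\pi$ faithfully through (i)--(iii): one needs a clean Green's function estimate on $\mathbb T_2$ showing that the $r_*$-dependent constants in the root count and in the offspring law cancel, so that \eqref{eq:single} holds with precisely the exponent $2a$.
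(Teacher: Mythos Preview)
Your overall architecture --- finitely many mesoscopic scales, excursion counts as an approximate critical branching process, first moment for the upper bound, truncated Paley--Zygmund for the lower bound --- is the paper's architecture. The upper bound is essentially identical to the paper's Lemma~\ref{ub_prep}.

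The gap is in your boosting step. You obtain $\PP(\mathcal U\neq\emptyset)\ge 1/C$ from Paley--Zygmund and then propose to upgrade this to $1-\varepsilon^{c(\delta)}$ by tiling into $\asymp\varepsilon^{-2\theta}$ sub-squares and invoking ``approximate independence of the excursion families feeding disjoint sub-squares.'' As stated this does not work: the events $\{T_\varepsilon(x)>t\}$ all involve the \emph{same global time} $t$, so the covering events in different sub-squares are genuinely coupled, and no product bound is available. You would first need to separate the time-dependent ingredient from the purely local excursion structure before any independence can be claimed.

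The paper avoids the boosting step entirely, and this is the point worth absorbing. It defines the truncation events $A^x$ purely in terms of excursion counts between the mesoscopic circles (no reference to $t$), and handles the passage from time to excursion count once and for all via a high-probability global event $\mathcal R$ (Proposition~\ref{time_to_exc}). Because $A^x$ is rotationally invariant and depends only on excursions inside $B_{r_{\lceil\delta K\rceil-1}}(x)$, the strong Markov property gives \emph{exact} independence $\PP(A^x\cap A^y)=\PP(A^x)\PP(A^y)$ for $d(x,y)>2r_{\lceil\delta K\rceil-1}$. The far pairs therefore contribute exactly $\mathcal A^2$ to the second moment, and the near-diagonal sum $\mathcal B$ is shown, via the scale-by-scale ``REM approximation,'' to be polynomially smaller: $\mathcal B/\mathcal A^2\le\varepsilon^{1.94\delta}$. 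Paley--Zygmund then gives $1-\varepsilon^{c(\delta)}$ directly, not merely a constant. In your language, the correct conclusion of the second moment computation is $\E[|\mathcal U^\sharp|^2]\le(1+\varepsilon^{c})(\E|\mathcal U^\sharp|)^2$, not $\le C(\E|\mathcal U^\sharp|)^2$; once you have this, no tiling or independence argument is needed.
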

Theorem \ref{one_t} will be proved by relating the natural timescale of the covering process to the excursion-counts of an embedded random walk, and a multiscale analysis of the latter which exploits some underlying, approximate hierarchical structure in the spirit of \cite{belius_kistler}.  

\subsection{Scales, embedded random walks and excursion-counts}
For $R\in \left(0,\frac{1}{2}\right)$ and $K\geq 1$ we consider {\it scales} $i=0,1,..,K$ and associate to each such scale a radius
\beq\label{scales_eq} r_i \defi R\left(\frac{\varepsilon}{R}\right)^{i/K}. \eeq
BM started on $\partial B_{r_i}$ hits $\partial B_{r_{i+1}}$ before $\partial B_{r_{i-1}}$ with probability $1/2$: by the strong Markovianity and rotational invariance, it follows that the process obtained by tracking the order in which BM visits the scales (with respect to one fixed center point and not counting multiple consecutive hits to the same scale) during one excursion from scale $1$ to scale $0$ is a simple random walk  (SRW) started at $1$ and stopped in $0$. Keeping track of all BM-excursions up to some time thus yields a collection of independent SRW-excursions from $1$ to $0$. 
\begin{figure}[h]
\begin{center}
    \includegraphics[width=0.35\textwidth]{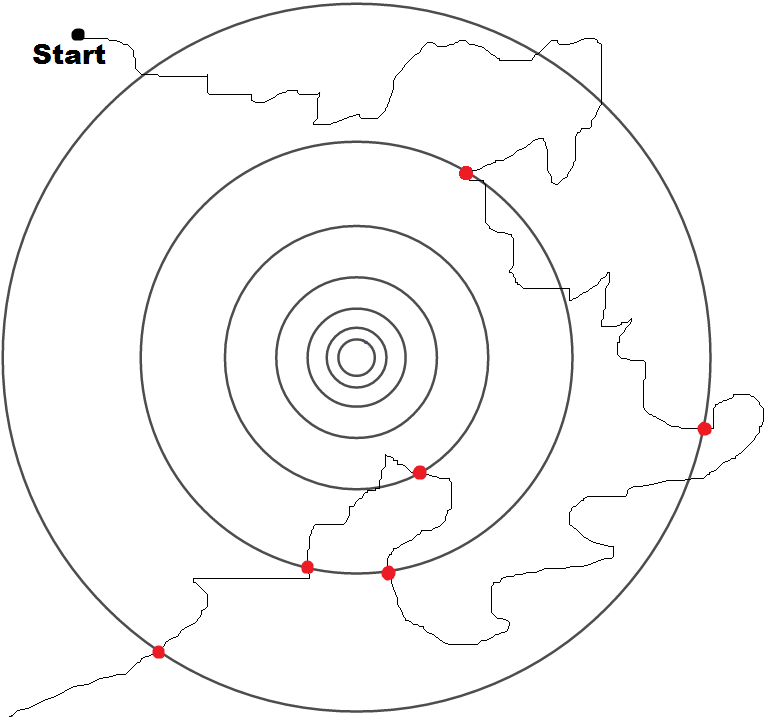}
\end{center}
\caption{Reading off the SRW excursions $1\rightarrow 0$ and $1\rightarrow 2\rightarrow 1\rightarrow 0$}
\label{bm_to_srw1}
\end{figure}
(The evolution of the SRW-excursions can be unambiguously read off the BM-path, see Figure \ref{bm_to_srw1}). For $x\in \mathbbm{T}_2$,  we set 
\beq D_n\left(x\right) \defi \mbox{time at which } W \mbox{ completes the } n\mbox{-th excursion from } \partial B_{r_1}\left(x\right) \mbox{ to } B^c_{r_0}\left(x\right). \eeq

\begin{prop} (Concentration of excursion-counts) \label{time_to_exc}
For $\delta,R \in \left(0,\frac{1}{2}\right)$ and $x\in \mathbbm{T}_2$, it holds
\beq \label{time_to_exc_eq1}\PP\left(D_n\left(x\right) \geq \left(1+\delta\right) n \frac{1}{\pi}\ln\frac{r_0}{r_1}\right) \leq \exp\left( -n \left(\frac{\delta^2}{8} +o_{r_1}(1)\right)\right)\eeq
\beq \label{time_to_exc_eq2}\PP\left(D_n(x) \leq  \left(1-\delta\right) n \frac{1}{\pi}\ln\frac{r_0}{r_1} \right) \leq \exp\left(-n \left(\frac{\delta^2}{4} +o_{r_1}\left(1\right)\right)\right)\eeq
for all $n\in \N$ as $r_1 \rightarrow 0$.
\end{prop}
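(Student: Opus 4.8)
The plan is to realise $D_n(x)$ as a chain of alternating \emph{legs} and to run a Cram\'er/Chernoff estimate on it. Write $\mu\defi\tfrac1\pi\ln\tfrac{r_0}{r_1}$, the quantity in the statement. By the strong Markov property applied at the successive hits of $W$ to $\partial B_{r_1}(x)$ and to $\partial B_{r_0}(x)$, and the rotational invariance of planar BM about $x$,
\[ D_n(x)=\sum_{k=1}^{n} I_k+\sum_{j=1}^{n}\sigma_j , \]
where $I_1$ is the first hitting time of $\partial B_{r_1}(x)$ by $W$ (a hitting time of $B_{r_1}(x)$ from $W_0$), each later $I_k$ is the return time from $\partial B_{r_0}(x)$ to $\partial B_{r_1}(x)$ --- a hitting time of $B_{r_1}(x)$ by BM on $\mathbb{T}_2$ started from a point of $\partial B_{r_0}(x)$, possibly incurring long toroidal excursions --- and each outward leg $\sigma_j$ runs from $\partial B_{r_1}(x)$ to $\partial B_{r_0}(x)$. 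Since $r_0=R<\tfrac12$, the disk $B_{r_0}(x)$ is genuinely Euclidean, so $\sigma_j$ is the exit time of $B_{r_0}(x)$ from a point of $\partial B_{r_1}(x)$: a bounded variable whose exponential moments tend to $1$ as the Chernoff parameter does, so it enters only the $o_{r_1}(1)$ error. All the content therefore lies in the toroidal hitting times $T_{B_{r_1}(x)}$, which it suffices to control uniformly over the start point.

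Two analytic inputs are needed. First, $\E_z[T_{B_{r_1}(x)}]=\tfrac1\pi\ln\tfrac1{r_1}+O(1)=(1+o_{r_1}(1))\mu$, uniformly over $z\notin B_{r_1}(x)$: solve $\tfrac12\Delta u=-1$ on $\mathbb{T}_2\setminus B_{r_1}(x)$ with $u=0$ on $\partial B_{r_1}(x)$ and insert the asymptotics of the Green's function of the torus with a small disk removed, the remainder being $O(1)$ because $r_0=R$ is a fixed constant. Second --- and this is the step I expect to be the main obstacle --- two-sided exponential-moment bounds, uniform in $z\notin B_{r_1}(x)$:
\[ \E_z\big[e^{\theta T_{B_{r_1}(x)}}\big]\le\frac{1+o_{r_1}(1)}{1-\theta\mu}\ \ (\theta\mu\in[0,\tfrac12]),\qquad \E_z\big[e^{-\theta T_{B_{r_1}(x)}}\big]\le\frac{1+o_{r_1}(1)}{1+\theta\mu}\ \ (\theta>0) . \]
For the first, I would use the principal Dirichlet eigenvalue $\lambda_1(r_1)$ of $\tfrac12\Delta$ on $\mathbb{T}_2\setminus B_{r_1}(x)$, which satisfies $\lambda_1(r_1)=(1+o(1))/\mu$ with eigenfunction essentially constant away from the hole; this yields the uniform tail bound $\PP_z(T_{B_{r_1}(x)}>t)\le(1+o(1))\,e^{-\lambda_1(r_1)t}$, which integrates to $\E_z[e^{\theta T_{B_{r_1}(x)}}]\le 1+(1+o(1))\theta/(\lambda_1(r_1)-\theta)$ and hence the claim. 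For the second --- that the hitting time is unlikely to be short --- the clean route is that BM started at distance $r_0$ from $x$ reaches $B_{r_1}(x)$ before escaping to a macroscopic distance only with probability of order $(\ln(1/r_1))^{-1}$, so it performs of order $\ln(1/r_1)$ roughly independent ``macroscopic attempts'' before succeeding and therefore spends time $\gtrsim\mu$; alternatively one builds an explicit supersolution of $\tfrac12\Delta v=\theta v$ bounding $v(z)=\E_z[e^{-\theta T_{B_{r_1}(x)}}]$ from above. In both directions the work is not in the order of magnitude but in pinning the sharp constant $\tfrac1\pi$ together with prefactors uniformly $1+o(1)$ in the start point --- i.e.\ a quantitative version of ``the hitting time of a shrinking disk is asymptotically exponential'' --- and the lower estimate on $T_{B_{r_1}(x)}$ is the more delicate side, no spectral tail bound being available there directly.

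Granting these inputs, the conclusion is a Chernoff bound, the uniform estimates being iterated along the chain by the strong Markov property so that no exact independence is needed. Using $D_n(x)=\sum_{k=1}^n I_k+\sum_{j=1}^n\sigma_j$,
\[ \PP\!\big(D_n(x)\ge(1+\delta)n\mu\big)\le\inf_{\theta>0}e^{-\theta(1+\delta)n\mu}\Big(\tfrac{1+o_{r_1}(1)}{1-\theta\mu}\Big)^{\!n}\big(1+o_{r_1}(1)\big)^{n}\le\exp\!\big(-n\big(\delta-\ln(1+\delta)+o_{r_1}(1)\big)\big), \]
the optimal $\theta=\tfrac1\mu\tfrac{\delta}{1+\delta}$ lying well inside the admissible range $\theta\mu\in[0,\tfrac12]$ when $\delta\in(0,\tfrac12)$; since $\delta-\ln(1+\delta)\ge\delta^2/8$ for $\delta\in(0,\tfrac12)$, this is \eqref{time_to_exc_eq1}. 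Discarding the nonnegative $\sigma_j$, $D_n(x)\ge\sum_{k=1}^n I_k$, whence
\[ \PP\!\big(D_n(x)\le(1-\delta)n\mu\big)\le\inf_{\theta>0}e^{\theta(1-\delta)n\mu}\Big(\tfrac{1+o_{r_1}(1)}{1+\theta\mu}\Big)^{\!n}\le\exp\!\big(-n\big(-\delta-\ln(1-\delta)+o_{r_1}(1)\big)\big), \]
and $-\delta-\ln(1-\delta)\ge\delta^2/4$ for $\delta\in(0,\tfrac12)$, giving \eqref{time_to_exc_eq2}. (In the lower bound one should keep $W_0$ at distance bounded away from $0$ from $x$, as in the applications, so that $I_1$ obeys the stated exponential bound; otherwise one simply drops $I_1$ and works with the $n-1$ remaining legs.)
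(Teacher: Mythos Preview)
Your decomposition of $D_n(x)$ into inward legs $I_k$ and outward legs $\sigma_j$, followed by an exponential Markov/Chernoff bound iterated along the chain via the strong Markov property, is exactly what the paper does. The divergence is in how you establish the two-sided exponential-moment bounds for the toroidal hitting time $T_{B_{r_1}(x)}$, and here you have made your life harder than necessary.

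You aim for the full ``asymptotic exponentiality'' statement $\E_z[e^{\pm\theta T}]\le(1+o(1))/(1\mp\theta\mu)$, invoking the principal Dirichlet eigenvalue for the positive side and, as you admit, leaving the negative side essentially open (neither the ``macroscopic attempts'' heuristic nor the supersolution idea is carried out, and the sharp constant really is the issue). The paper bypasses both difficulties with Kac's moment formula: from $\E_y[\tau_A^i]\le i!\,(\sup_y\E_y[\tau_A])^i$ one obtains, for $0<\theta<\tfrac12(\sup\E[\tau_A])^{-1}$,
\[
\E_y[e^{\theta\tau_A}]\le \exp\!\big(\theta\,\E_y[\tau_A]+2\theta^2(\sup\E[\tau_A])^2\big),
\]
and from the elementary $e^{-x}\le 1-x+x^2$ together with Kac for the second moment,
\[
\E_y[e^{-\theta\tau_A}]\le \exp\!\big(-\theta\,\E_y[\tau_A]+O(1)\,\theta^2(\sup\E[\tau_A])^2\big).
\]
These are sub-Gaussian rather than sub-exponential bounds, but that is all one needs: the paper then simply \emph{plugs in} fixed values $\theta\approx\delta/(4\mu)$ and $\theta\approx\delta/(2\mu)$ (no optimisation) and reads off the rates $\delta^2/8$ and $\delta^2/4$ directly, with Proposition~\ref{expectations} supplying $\sup_y\E_y[\tau_{B_{r_1}}]=\mu+O(1)$. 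No spectral input, no lower-tail analysis, and the negative side is in fact the \emph{easier} one.

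What your route buys is the exact exponential Cram\'er rates $\delta-\ln(1+\delta)$ and $-\delta-\ln(1-\delta)$, which are sharper than the paper's quadratic ones; but since the proposition only asks for $\delta^2/8$ and $\delta^2/4$, this refinement is not needed, and the price you pay---an unproved sharp negative-moment bound---is a genuine gap as written. If you want to complete your argument, the quickest fix is precisely the Kac trick above for both signs.
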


\noindent Proposition \ref{time_to_exc} will bear fruits when combined with the following 
 
\begin{prop} (First moment of hitting times) \label{expectations}
There exists an universal constant $C>0$, such that
\beq \label{expec1}\left|\E_y[\tau_{B_r\left(x\right)}]-\frac{1}{\pi}\ln \frac{d_{\mathbbm{T}_2}\left(x,y\right)}{r}\right| \leq C \eeq
for all $x\in \mathbbm{T}_2$, $r>0$ and $y\in  \mathbbm{T}_2\setminus B_{r}\left(x\right)$. Also 
\beq \label{expec2}\E_y\left[\tau_{B^c_r\left(x\right)}\right] =  \frac{r^2 - d_{\mathbbm{T}_2}\left(x,y\right)^2}{2}\eeq
for all $x\in \mathbbm{T}_2$, $r\in \left(0,\frac{1}{2}\right)$ and $y\in B_{r}\left(x\right)$.
\end{prop}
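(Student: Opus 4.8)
My plan is to treat the two estimates separately, reducing each to a Green's-function / harmonic-function computation on $\R^2$ and controlling the torus-vs-plane discrepancy by a bounded correction. For the hitting time $\tau_{B_r(x)}$ of a small ball, the idea is that if $d_{\mathbbm{T}_2}(x,y)$ is of order $1$ the torus geometry matters but the claimed logarithm is $O(1)$, so (\ref{expec1}) is then just a two-sided bound on the expected hitting time of a ball of radius $r$ by BM on a compact manifold, which is classical and finite; whereas if $d_{\mathbbm{T}_2}(x,y)$ is small compared to the injectivity radius, a local chart identifies the situation with planar BM, where $\E_y[\tau_{B_r(x)}]$ is computed exactly. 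Concretely, I would introduce an intermediate radius $\rho$ (say $\rho = 1/4$), decompose the hitting time as the time to reach $\partial B_\rho(x)$ plus the subsequent time to reach $\partial B_r(x)$ from $\partial B_\rho(x)$, use the strong Markov property, and observe that the first piece contributes an $O(1)$ term (bounded expected hitting time of a macroscopic ball, uniformly over starting points and over small $r$), while the second piece is a genuinely planar annulus crossing. For planar BM in the annulus $\{r < |z-x| < \rho\}$, the function $z \mapsto \E_z[\tau_{B_r(x)}]$ started from $\partial B_\rho$ has an exact form obtained by solving $\frac12\Delta u = -1$ with the right boundary data, and its value is $\frac1\pi \ln(\rho/r)$ up to an additive constant independent of $r$; combining the pieces and absorbing all bounded terms into $C$ gives (\ref{expec1}). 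One must also handle the case where $y$ is between $r$ and $\rho$ from $x$, which is even more directly planar.

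For (\ref{expec2}), the exit time $\tau_{B^c_r(x)}$ from a ball of radius $r < \tfrac12$ is a purely local event: the ball $B_r(x)$ is isometric to a Euclidean disc of radius $r$ (the injectivity radius of $\mathbbm{T}_2$ at scale $<1/2$ along the metric $d_{\mathbbm{T}_2}$ makes $B_r(x)$ a genuine flat disc), so $W$ started at $y \in B_r(x)$ behaves exactly like planar BM until it exits. Then $u(y) := \E_y[\tau_{B_r(x)^c}]$ solves $\tfrac12 \Delta u = -1$ in the disc with $u = 0$ on the boundary circle, whose unique solution is $u(y) = \tfrac12\big(r^2 - |y-x|^2\big)$, i.e. exactly (\ref{expec2}) with $|y-x|$ replaced by $d_{\mathbbm{T}_2}(x,y)$ (which coincide here). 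This part is essentially immediate once the isometry is noted, so I would state it crisply and move on.

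I expect the main obstacle to be making the $O(1)$ bounds genuinely \emph{uniform} in $r$ and in $x,y$ for (\ref{expec1}), especially near the crossover scale $d_{\mathbbm{T}_2}(x,y) \asymp 1$, where neither the "macroscopic" nor the "planar" description is clean. The cleanest fix is probably to avoid chopping at a fixed $\rho$ and instead compare directly: write $\E_y[\tau_{B_r(x)}] = \int_{\mathbbm{T}_2} G_{B_r(x)}(y,z)\,dz$ where $G_{B_r(x)}$ is the Green's function of BM killed on entering $B_r(x)$, and compare $G_{B_r(x)}$ with the full-torus Green's function $G_{\mathbbm{T}_2}$ (which has the $-\tfrac1\pi \ln d_{\mathbbm{T}_2}$ singularity plus a bounded part); the difference is harmonic in $z$ off $B_r(x)$ and controlled on $\partial B_r(x)$ by $\sup_{w \in \partial B_r(x)} G_{\mathbbm{T}_2}(y,w) = \frac1\pi \ln \frac1r + O(1)$, which is exactly the advertised main term. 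Either route works; the integral-against-Green's-function route has the advantage of being uniform by construction, and I would most likely present that one. No estimate here is delicate — the content is entirely that the planar logarithmic Green's function governs everything and the torus only perturbs it by a bounded amount — so the proof should be short.
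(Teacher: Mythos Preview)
Your treatment of \eqref{expec2} is correct and identical to the paper's: identify $B_r(x)$ with a Euclidean disc (possible since $r<\tfrac12$) and solve the Dirichlet--Poisson problem explicitly.

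For \eqref{expec1}, your preferred Route~2 is the paper's strategy in spirit, but the execution as you describe it does not work. On the closed torus there is \emph{no} Green's function with $\Delta G=-\delta$; compactness forces the zero-mean normalization $\Delta G_x=\pm(\delta_x-1)$. Hence the difference $G_{B_r(x)}(y,\cdot)-G_{\mathbbm{T}_2}(y,\cdot)$ is not harmonic off $B_r(x)$: its Laplacian is a nonzero constant. Your boundary evaluation is also off: for $w\in\partial B_r(x)$ one has $d_{\mathbbm{T}_2}(y,w)\approx d_{\mathbbm{T}_2}(y,x)$, so $G_{\mathbbm{T}_2}(y,w)$ carries $\ln d_{\mathbbm{T}_2}(x,y)$, not $\ln(1/r)$. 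The paper's one-line fix is to skip the killed Green's function altogether and use the extra constant rather than fight it: since $\Delta\mu=-2$ on $\mathbbm{T}_2\setminus B_r(x)$ and the torus $G_x$ has $\Delta G_x$ equal to a unit constant away from $x$, the combination $\mu+2G_x$ is harmonic there, and the maximum principle sandwiches $\mu(y)+2G_x(y)$ between $2\inf_{\partial B_r(x)}G_x$ and $2\sup_{\partial B_r(x)}G_x$. Once one knows $G_x(\cdot)=\frac{1}{2\pi}\ln d_{\mathbbm{T}_2}(x,\cdot)+O(1)$ (which the paper proves directly from the Fourier series), \eqref{expec1} drops out immediately, with the $\ln d_{\mathbbm{T}_2}(x,y)$ coming from $G_x(y)$ and the $\ln r$ from $G_x$ on $\partial B_r(x)$.

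Your Route~1 has a more basic gap: after reaching $\partial B_\rho(x)$ the Brownian motion does not stay in any annulus, and on $\R^2$ the expected hitting time of a disc from outside is infinite, so the ``second piece'' is not a planar annulus computation. Repairing this by counting excursions between $\partial B_\rho$ and $\partial B_{2\rho}$ is possible, but that is essentially rederiving Proposition~\ref{time_to_exc} in order to prove Proposition~\ref{expectations}, which is the wrong direction of dependence.
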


\noindent Propositions \ref{time_to_exc} and \ref{expectations} make precise the intuition that $D_n(x) \approx n \E_{B_{r_0}}[\tau_{B_{r_1}}]$,  allowing in particular to switch from the natural timescale to the excursion-counts.  Armed with the above results, which will be proved in Section \ref{ex_sec}, we discuss the main steps behind Theorem \ref{one_t}.  The upper bound is easy: we address that first. \\

\noindent Here and below, $L_{\varepsilon}$ will denote the square lattice of mesh size $\lceil \varepsilon^{-1} \rceil ^{-1}$.  Remark that $|L_\vare| \approx \vare^{-2}$.  

\subsection{The upper bound}
We will show that, with overwhelming probability, at time 
\beq \label{shortening_t}
t_{\vare}(\delta) \defi \left(1+\delta\right) \frac{2}{\pi}\left(\ln \varepsilon\right)^2, 
\eeq 
each $\varepsilon$-ball with center on $L_\varepsilon$ has been hit by BM and extend this to the entire torus thereafter.

\begin{lemma}\label{ub_prep}
For $\delta>0$ small enough there exist constants $c, c' >0$ depending on $\delta$ only such that
\beq\PP\left(\exists x\in L_{\varepsilon}\mbox{ such that } T_\vare(x)>t_\vare(\delta) \right)\leq \varepsilon^{c}\eeq
holds for all $0<\varepsilon<c'$.
\end{lemma}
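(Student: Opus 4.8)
The plan is to fix a center $x \in L_\varepsilon$ and show that the probability that the $\varepsilon$-ball around $x$ has \emph{not} been hit by time $t_\varepsilon(\delta)$ is smaller than $\varepsilon^{2+c}$ for a suitable $c = c(\delta) > 0$; a union bound over the $\approx \varepsilon^{-2}$ centers in $L_\varepsilon$ then finishes the proof. To estimate the one-point probability I would work with the scales $r_i = R(\varepsilon/R)^{i/K}$ from \eqref{scales_eq}, taking $r_K = \varepsilon$ and $r_0 = R$ (with $R$ a fixed small constant and $K$ fixed), and count the number $N$ of excursions of $W$ from $\partial B_{r_1}(x)$ to $\partial B^c_{r_0}(x)$ completed by time $t_\varepsilon(\delta)$. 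The key point is that each such macroscopic excursion makes, independently, an attempt to reach the innermost scale $\partial B_{r_K}(x) = \partial B_\varepsilon(x)$: by the SRW description in Section 2.1, during one excursion from scale $1$ back to scale $0$ the embedded walk is a SRW started at $1$ and stopped at $0$, so the chance it reaches scale $K$ before returning to $0$ is the gambler's-ruin probability $\tfrac{1}{K}$ (up to the harmonic-measure correction between BM on the torus and BM in an annulus, which is $1+o(1)$ and absorbed into constants). Hence, conditionally on $N$, the ball $B_\varepsilon(x)$ is missed with probability at most $(1 - \tfrac{c_0}{K})^{N}$.

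Next I would use Propositions \ref{time_to_exc} and \ref{expectations} to argue that $N$ is large with overwhelming probability. By \eqref{expec1}, one excursion from $\partial B_{r_1}(x)$ to $B^c_{r_0}(x)$ costs in expectation $\approx \tfrac{1}{\pi}\ln\tfrac{r_0}{r_1} = \tfrac{1}{\pi}\cdot\tfrac{1}{K}\ln\tfrac{R}{\varepsilon}(1+o(1))$ units of time, so heuristically by time $t_\varepsilon(\delta)$ we expect
\[
N \;\approx\; \frac{t_\varepsilon(\delta)}{\tfrac{1}{\pi}\ln\frac{r_0}{r_1}} \;=\; (1+\delta)\,\frac{2}{\pi}(\ln\varepsilon)^2 \cdot \frac{\pi K}{\ln(R/\varepsilon)} \;=\; (1+\delta)\,2K\,\ln\frac{1}{\varepsilon}\,(1+o(1)).
\]
Quantitatively, applying \eqref{time_to_exc_eq2} with, say, $\delta/2$ in place of $\delta$ and $n = (1-\delta/2)(1+\delta)\,2K\ln(1/\varepsilon)$, together with the strong Markov property to chain together blocks of excursions, gives $\PP(N \le n) \le \exp(-c_1 n) = \varepsilon^{c_1'}$ for some $c_1, c_1' > 0$ proportional to $K\delta^2$. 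On the complementary event $\{N > n\}$ the ball is missed with probability at most $(1-\tfrac{c_0}{K})^n \le \exp(-\tfrac{c_0}{K}n)$, and since $n$ is a multiple of $\ln(1/\varepsilon)$ this is $\varepsilon^{2c_0(1-\delta/2)(1+\delta)}$. Choosing $K$ large enough (but fixed) and $\delta$ small, the exponent $2c_0(1-\delta/2)(1+\delta)$ — which tends to $2c_0 > 2$ as $\delta \to 0$, since the excursion-to-scale-$K$ probability $c_0/K$ is exactly $1/K$ so $2c_0 = 2$... more care needed here — must be pushed strictly above $2$; this is where the precise constant $\tfrac{2}{\pi}$ enters, and the budget of excursions must be tuned so that the missing probability beats the $\varepsilon^{-2}$ union bound with room to spare, yielding $\varepsilon^{2+c}$.

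The main obstacle is precisely this last balancing act, and it forces the correct choice of the leading constant. With $r_0 = R$ fixed, the number of excursions by time $t_\varepsilon(\delta)$ is $\sim \pi t_\varepsilon(\delta)K/\ln(1/\varepsilon)$ and each reaches scale $\varepsilon$ with probability $\sim 1/K$, so the expected number of scale-$\varepsilon$ hits is $\sim \pi t_\varepsilon(\delta)/\ln(1/\varepsilon) = 2(1+\delta)\ln(1/\varepsilon)$, making the per-point miss probability $\sim \varepsilon^{2(1+\delta)}$ — which already beats $\varepsilon^{-2}$ by $\varepsilon^{2\delta}$, and one absorbs the error terms and the gap between "expected" and "with high probability" (via the large-deviation bound \eqref{time_to_exc_eq2}) into a slightly smaller constant $c(\delta) > 0$. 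Once every $x \in L_\varepsilon$ is hit, extending to the whole torus is routine: any $y \in \mathbb{T}_2$ lies within $\sqrt{2}\,\varepsilon$ of some lattice point $x \in L_\varepsilon$, so if BM has entered $B_\varepsilon(x)$ it has entered $B_{(1+\sqrt2)\varepsilon}(y)$; replacing $\varepsilon$ by $\varepsilon/(1+\sqrt2)$ throughout (which only changes constants) converts the lattice statement into the full-torus statement, completing the proof of the lemma and, modulo the lower bound, of Theorem \ref{one_t}.
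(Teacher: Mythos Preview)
Your approach is essentially identical to the paper's: both set a target excursion count $n \approx (1+\delta/2)\,2K\ln(1/\varepsilon)$, split $\{T_\varepsilon(x) > t_\varepsilon(\delta)\}$ into ``too few excursions by time $t_\varepsilon(\delta)$'' (handled by Proposition~\ref{time_to_exc}) and ``$n$ excursions all miss scale $K$'' (handled by the gambler's-ruin probability $(1-1/K)^n$), and then take a union bound over $L_\varepsilon$. One small slip: the event $\{N \le n\}$ is $\{D_n(x) > t_\varepsilon(\delta)\}$, an \emph{upper}-tail event for $D_n$, so you need \eqref{time_to_exc_eq1}, not \eqref{time_to_exc_eq2}; with that correction (and noting that no ``chaining'' via the Markov property is needed, since Proposition~\ref{time_to_exc} already applies to $n$ excursions), your argument goes through exactly as in the paper. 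The last paragraph on extending from $L_\varepsilon$ to the full torus is not part of Lemma~\ref{ub_prep} but of the subsequent deduction of the upper bound in Theorem~\ref{one_t}, and the paper does it the same way (with $\varepsilon/10$ in place of your $\varepsilon/(1+\sqrt{2})$).
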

\begin{proof}
We set  
\beq n_\vare(\de) =-\left(1+\delta/2\right)2K\ln\left(\varepsilon\right), \eeq
which is slightly larger then the typical amount of excursions at time $t_{\vare}(\delta)$. For an $\varepsilon$-ball to be avoided up to some time: either {\it i)} BM needs to complete less than $n_\vare(\de)$ excursions from scale $1$ to scale $0$ in that time or {\it ii)} scale $K$, corresponding to the $\varepsilon$-ball, has to be avoided for at least $n_\vare(\de)$ many excursions. Therefore setting 
\[ 
\mathcal{T}\left(x\right) \defi \mbox{number of the first excursion from } \partial B_{r_1}\left(x\right) \mbox{ to } B^c_{r_0}\left(x\right) \mbox{ that hits } B_{r_K}\left(x\right).
\]
we have 
\beq \label{ub_eq1}
\PP\left(\exists x\in L_{\varepsilon} \mbox{ s.t. } T_\vare(x)>t_\vare(\delta) \right)\leq \PP\left(\exists x\in L_{\varepsilon} \mbox{ s.t. } \mathcal{T}\left(x\right) > n_\vare(\de) \mbox{ or } D_{n_\vare(\de)}\left(x\right) \geq  t_\vare(\delta)\right) .
\eeq
By Markov inequality and union bound
\beq \bea  \label{ah}
\eqref{ub_eq1} \leq\sum\limits_{x\in L_\varepsilon} \PP\left( \mathcal{T}\left(x\right) > n_\vare(\de) \right)+\PP\left(D_{n_\vare(\de)}\left(x\right) \geq  t_\vare(\delta)  \right)\,.
\eea\eeq
The probability that $n_\vare(\de)$ independent excursions of a SRW starting in $1$ all hit $0$ before $K$ is given by $(1-1/K)^{n_\vare(\de)}$, while the second probability on the r.h.s of \eqref{ah} is estimated by Proposition \ref{time_to_exc}. This shows that the above is at most
\beq  
\left|L_\varepsilon\right| \left[\left(1-\frac{1}{K}\right)^{n_\vare(\de)}+ \exp\left( -\frac{\delta^2}{72} n_\vare(\de) \right) \right] \leq \varepsilon^{\delta} \left(1+o_\varepsilon\left(1\right)\right)\,, 
\eeq
for $K$ large enough, the last inequality since $1-\frac{1}{K} \leq e^{-1/K}$, and $\left|L_\varepsilon\right| \approx \varepsilon^{-2}$.
\end{proof}

\noindent Coming back to the upper bound in Theorem \ref{one_t}, 
\beq \label{fif}
\PP \left( T_\varepsilon > t_\vare(\de) \right) = \PP\left(\exists x\in \mathbbm{T}_2: T_\varepsilon\left(x\right) > t_\vare(\de) \right) \leq \PP\left(\exists x\in L_{\varepsilon/10}: T_{\varepsilon/10}(x) > t_\vare(\de)  \right),
\eeq
the last step using that any $\varepsilon$-ball contains a ball of radius $\varepsilon/10$ with center in $L_{\varepsilon/10}$. For $\varepsilon>0$ small enough depending on $\delta$ we have $t_\vare(\de) \geq t_{\vare/10}(\delta/2)$, therefore it holds that
\beq \label{six}
 \eqref{fif} \leq\PP\left(\exists x\in L_{\varepsilon/10}: T_{\varepsilon/10}\left(x\right) > t_{\vare/10}(\delta/2) \right). 
\eeq
Applying Lemma $\ref{ub_prep}$ with $\varepsilon/10$ for $\varepsilon$ and $\delta/2$ yields the upper bound in Theorem \ref{one_t}.

\subsection{The lower bound}
\label{lb_sec}
We show that with overwhelming probability there exists $x\in \mathbbm{T}_2$ with avoided $\varepsilon$-ball at time 
\beq 
\mathfrak t = \mathfrak t(\vare, \delta) \defi  \left(1-\delta\right)^4\frac{2}{\pi}\left(\ln\varepsilon\right)^2\,.
\eeq 
Theorem \ref{one_t} will then follow immediately by considering\footnote{This is notationally convenient, but holds no deeper meaning.} $\hat{\delta} \defi 1- \left(1-\delta\right)^{4}$. Set 
\beq 
\mathfrak n(j) = \mathfrak n(j; \vare, \de, K) \defi -2K\left(1-\delta\right)^{j}\ln\varepsilon, \qquad (j\in \N).
\eeq
With $\tau_{r}\defi \tau_{r}\left(x\right)$ denoting the first time BM hits the $r$-ball around $x\in \mathbbm{T}_2$, we define the events
\beq \mathcal{R} \defi \bigcap_{x\in L_\varepsilon}\left\{D_{\mathfrak n(3)}(x) > \mathfrak t\right\} \quad \mbox{ and }
\eeq
\beq\bea  
\mathcal{R}^x \defi \left\{ \tau_{r_1} < \tau_{r_K} \right\} \cap  \left\{ \text{At most}\; \mathfrak n(2) \, \text{excursions}\; \lceil \delta k \rceil \shortrightarrow \lceil \delta k \rceil-1 \; \text{during first $\mathfrak n(3)$ excursions}\,  1\shortrightarrow 0  \right\}.
\eea \eeq
For $n\in \mathbb{N}$ and $l\in\left\{1,..,K-1\right\}$, let
\beq \bea 
\mathcal{N}_{l}^x\left(n\right) \defi & \; \mbox{number of excursions of } W \mbox{ from } \partial B_{r_l}\left(x\right) \mbox{ to } \partial B_{r_{l+1}}\left(x\right) \mbox{ within the } \\ &\mbox{ first } n \mbox{ excursions from } \partial B_{r_{l}}\left(x\right) \mbox{ to } \partial B_{r_{l-1}}\left(x\right) \mbox{ after time } \tau_{r_1}\mbox{}\,. 
\eea\eeq
For $x\in \mathbbm{T}_2$ define the events
\beq \bea  
A^x & \defi  \bigcap\limits_{l=\lceil \delta K\rceil}^{K-1} A_l^x, \quad \text{where}\quad
A^x_l & \defi  \left\{ \mathcal{N}^x_{l}\left(n \left(1-\frac{l}{K}\right)^2\right) \leq n  \left(1-\frac{l+1}{K}\right)^2 \right\}.
\eea\eeq
The events $A, \mathcal R$ are motivated by the following observations. First, it can be checked via Doob's h-transform that the expected number of excursions from $l$ to $l+1$ performed by a SRW started at $1$ and stopped at $0$ and conditioned not to hit $K$, is approximately  $\left[1-(l+1)/K\right]^2$. The events $A^x$ thus describe the natural avoidance strategy of scale $K$ by $n$ independent such SRW, which is in turn equivalent to specifying the avoidance strategy of an $\varepsilon$-ball.  Second, we claim that 
\beq \label{lowest}
\mathcal{R}\cap \mathcal{R}^x \cap A^x \subset \{ B_{\varepsilon}\left(x\right) \mbox{ is not hit up to time } t \}.\eeq

\noindent Remark in fact that on $\mathcal{R}^x$, the ball $B_\varepsilon\left(x\right)$ is not hit before $\partial B_{r_1}\left(x\right)$, hence the $\varepsilon$-ball can only be hit in an excursion from $B_{r_1}$ to $B_{r_0}$.  
$\mathcal{R} $ ensures that there are at most $\mathfrak n(3)$-excursions before \mbox{time $\mathfrak t$}. Therefore, on $\mathcal{R}^x\cap \mathcal{R}$, there are at most $\mathfrak n(2)$ excursions from scale $\lceil \delta K\rceil \rightarrow \lceil \delta K\rceil - 1$ at \mbox{time $\mathfrak t$}. But on $A^x$, none of these excursions reaches scale $K$, hence the $\varepsilon$-ball is not hit, and  \eqref{lowest} holds. 

In light of \eqref{lowest}, and in view of the lower bound in Theorem \ref{one_t},  estimates on the probabilities of the $\mathcal R, A$-events are needed. This information is provided by Lemma \ref{regular} and \ref{one_two_point} below, whose proofs are deferred to Section \ref{proofs_lemmata_lb}. Concerning the $\mathcal R$-event we state

\begin{lemma}\label{regular}
For all $\delta>0$ and large enough $K= K(\delta) \in \N$  there exist constants $\kappa, \kappa' >0$ 
depending  on $\delta, K$ only such that 
\beq 
\inf_{x\in L_\varepsilon \setminus B_{r_1}(W_0), \varepsilon \in (0,\kappa')} 
 \PP\left(\mathcal{R}^x \right),\PP(\mathcal{R}) \geq 1- \varepsilon^\kappa\,. 
\eeq
\end{lemma}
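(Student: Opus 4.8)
The plan is to estimate the two probabilities separately, in each case reducing to the concentration estimates of Proposition \ref{time_to_exc} (for the $\mathcal R$-part) and to a simple large-deviations bound for a sum of i.i.d.\ geometric random variables (for the $\mathcal R^x$-part). For $\PP(\mathcal R)$: the event $\mathcal R = \bigcap_{x\in L_\varepsilon}\{D_{\mathfrak n(3)}(x) > \mathfrak t\}$ is an intersection over the $\approx \varepsilon^{-2}$ points of $L_\varepsilon$, so I would bound $\PP(\mathcal R^c) \le |L_\varepsilon| \sup_{x}\PP(D_{\mathfrak n(3)}(x) \le \mathfrak t)$ by a union bound. The point is that $\mathfrak n(3) = -2K(1-\delta)^3 \ln\varepsilon$ excursions is \emph{fewer} than the typical number at time $\mathfrak t = (1-\delta)^4 \frac{2}{\pi}(\ln\varepsilon)^2$: indeed $\mathfrak n(3)\cdot \frac1\pi \ln\frac{r_0}{r_1} = \mathfrak n(3)\cdot\frac1{\pi K}\ln\frac{R}{\varepsilon} \approx -\frac{2}{\pi}(1-\delta)^3(\ln\varepsilon)^2$, which exceeds $\mathfrak t$ by a factor $(1-\delta)^{-1}$. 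Hence $\PP(D_{\mathfrak n(3)}(x)\le \mathfrak t)$ is a genuine lower-tail event with relative deviation $\asymp \delta$, and \eqref{time_to_exc_eq2} gives a bound $\exp(-c(\delta,K)\,\mathfrak n(3)) = \varepsilon^{c'(\delta,K)}$ with $c'$ as large as we like once $\varepsilon$ is small; absorbing the $|L_\varepsilon|\approx\varepsilon^{-2}$ factor and choosing $K$ (hence the implicit constants) appropriately yields $\PP(\mathcal R)\ge 1-\varepsilon^\kappa$.

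For $\inf_x \PP(\mathcal R^x)$: here $x$ ranges over $L_\varepsilon\setminus B_{r_1}(W_0)$, which guarantees BM starts outside $B_{r_1}(x)$, so $\{\tau_{r_1}<\tau_{r_K}\}$ is the event that an SRW started at scale $j_0\in\{1,\dots,K-1\}$ (the scale of $W_0$ relative to $x$) reaches $1$ before $K$; this has probability bounded below by $1/K$ uniformly, which is a harmless constant. The substantive part is the second event in $\mathcal R^x$: during the first $\mathfrak n(3)$ excursions $1\shortrightarrow 0$, the number of sub-excursions $\lceil\delta K\rceil \shortrightarrow \lceil\delta K\rceil - 1$ should be at most $\mathfrak n(2) = -2K(1-\delta)^2\ln\varepsilon$. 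Each excursion $1\to 0$ of the SRW makes a $\mathrm{Geometric}$-distributed number of crossings of the level $\lceil\delta K\rceil$ (with success probability of order $1/K$ by the gambler's-ruin estimate), so the total count is a sum of $\mathfrak n(3)$ i.i.d.\ geometrics with mean $\asymp \mathfrak n(3)/K$; one checks the target $\mathfrak n(2)$ sits comfortably above this mean — the ratio $\mathfrak n(2)/\mathfrak n(3) = (1-\delta)^{-1} > 1$ provides exactly the required slack after accounting for the $\asymp 1/K$ factor, for $K$ large. A standard exponential Markov (Cramér) bound for the geometric sum then gives $\PP((\mathcal R^x)^c) \le \varepsilon^\kappa$ uniformly in $x$, with no union bound needed.

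The main obstacle, and the only place demanding care, is the second estimate: one must verify that the expected number of $\lceil\delta K\rceil\shortrightarrow\lceil\delta K\rceil-1$ excursions per $1\to 0$ excursion is indeed small enough — i.e.\ that $\mathbb E[\#\text{crossings}]\cdot\mathfrak n(3)$ is below $\mathfrak n(2)$ — which forces a quantitative lower bound on $K$ as a function of $\delta$ and pins down the constants $\kappa,\kappa'$. Concretely, a gambler's-ruin computation shows an SRW excursion from $1$ to $0$ visits level $m:=\lceil\delta K\rceil$ with probability $1/m$, and conditionally on doing so performs a geometric number of down-crossings $m\to m-1$ with mean of order $1$; multiplying through, the per-excursion mean is $O(1/m)=O(1/(\delta K))$, so $\mathfrak n(3)\cdot O(1/(\delta K)) = O(\delta^{-1}(1-\delta)^3(-\ln\varepsilon)) \ll \mathfrak n(2) = 2K(1-\delta)^2(-\ln\varepsilon)$ as soon as $K \gg \delta^{-1}$. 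Everything else — the gambler's-ruin probabilities, the i.i.d.\ structure across excursions (from the strong Markov property, already invoked in the setup), and the Cramér bound — is routine, and the two pieces combine by $\PP(\mathcal R^x)\ge \PP(\{\tau_{r_1}<\tau_{r_K}\}) - \PP((\text{count event})^c) \ge 1/K - \varepsilon^\kappa \ge 1-\varepsilon^{\kappa}$ after relabelling constants (noting $1/K$ is itself bounded below, so the displayed bound $1-\varepsilon^\kappa$ should be read with $\kappa$ chosen so that $\varepsilon^\kappa$ dominates $1-1/K$ — or, more cleanly, one sharpens $\{\tau_{r_1}<\tau_{r_K}\}$ using that $W_0$ is macroscopically far from $x$ so this probability is in fact $1-o(1)$).
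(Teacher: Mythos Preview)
Your overall architecture matches the paper's: union bound plus Proposition~\ref{time_to_exc} for $\PP(\mathcal R)$, and a Cram\'er-type bound on the crossing count for $\PP(\mathcal R^x)$. However, both steps of your $\mathcal R^x$-analysis contain concrete errors.

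\textbf{First}, the event $\{\tau_{r_1}<\tau_{r_K}\}$ is not a gambler's-ruin event with probability $\ge 1/K$: it holds \emph{almost surely} for every $x\in L_\varepsilon\setminus B_{r_1}(W_0)$. The radii satisfy $r_K=\varepsilon<r_1$, so $B_{r_K}(x)\subset B_{r_1}(x)$; since $W_0\notin B_{r_1}(x)$, the Brownian path must cross $\partial B_{r_1}(x)$ before it can enter $B_{r_K}(x)$. There is no probabilistic estimate needed here, and your final inequality $\PP(\mathcal R^x)\ge 1/K-\varepsilon^\kappa$ (which would not yield $1-\varepsilon^\kappa$) is a symptom of this confusion.

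\textbf{Second}, and more substantively, your computation of the per-excursion mean is wrong. You claim that conditionally on an SRW excursion $1\to 0$ reaching level $m=\lceil\delta K\rceil$, the number of down-crossings $m\to m-1$ has mean $O(1)$, giving an unconditional mean $O(1/m)$. In fact, once the walk is at $m-1$ after a down-crossing it returns to $m$ before $0$ with probability $(m-1)/m$, so the conditional number of down-crossings is geometric with parameter $1/m$ and mean $m$; the unconditional mean is therefore $\tfrac{1}{m}\cdot m=1$, exactly (this is the Bernoulli$\times$geometric the paper uses). Consequently the expected total over $\mathfrak n(3)$ excursions is $\mathfrak n(3)$, not $O(\mathfrak n(3)/(\delta K))$. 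Your verification that the target $\mathfrak n(2)$ sits above the mean is thus based on incorrect numbers. The argument is salvageable because $\mathfrak n(2)/\mathfrak n(3)=(1-\delta)^{-1}>1$, so Cram\'er still gives a polynomial-in-$\varepsilon$ bound---but the slack is $\delta$-sized, not growing with $K$, and the rate constant is $2K(1-\delta)^3 I((1-\delta)^{-1})$ for the Bernoulli$\times$geometric rate function $I$, not what your heuristic suggests.
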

Concerning the $A$-events, 

\begin{lemma} \label{one_two_point} (One-point estimates) For $K$ large, $\varepsilon>0$ small enough (depending on $\delta$)
\beq  \label{one_point}
\varepsilon^{2- 1.99\delta} \leq \PP\left(A^x\right) \leq \varepsilon^{2- 2.01\delta},
\eeq
\end{lemma}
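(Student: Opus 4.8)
The plan is to exploit the product structure of the event $A^x = \bigcap_{l=\lceil\delta K\rceil}^{K-1} A_l^x$ across scales. By the strong Markov property and rotational invariance discussed around \eqref{scales_eq}, the $n$ excursions from $\partial B_{r_1}(x)$ to $\partial B_{r_0}(x)$ (recall $n = \mathfrak n(3) = -2K(1-\delta)^3\ln\varepsilon$, or whatever excursion count $A^x$ implicitly refers to — I will fix $n$ so that $n/K \sim -2(1-\delta)^3\ln\varepsilon$) give rise to a collection of i.i.d.\ SRW excursions from $1$ to $0$, and the counts $\mathcal N_l^x(m)$ are, for each fixed $l$, distributed as the number of up-steps from $l$ to $l+1$ made by $m$ independent SRW excursions started at $1$ (equivalently, by $m$ independent geometric-type trials, since each excursion from $l$ visiting $l$ returns to $l+1$ before $l-1$ with probability $\tfrac12$ until it finally goes to $l-1$). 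Crucially, the process $(\mathcal N_l^x)_{l}$ has a Markovian structure in $l$: given $\mathcal N_l^x = m$, the number $\mathcal N_{l+1}^x$ of excursions reaching $l+2$ out of the $m$ excursions reaching $l+1$ is again a sum of $m$ i.i.d.\ contributions. So $A^x$ is a "staircase" event $\{\mathcal N_{l}^x(n a_l) \le n a_{l+1}\text{ for all }l\}$ with $a_l = (1-l/K)^2$, and I would estimate its probability by conditioning scale by scale.

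The key computation is the following single-scale transition estimate: if we feed in $m \approx n a_l$ excursions at scale $l$, the number continuing to scale $l+1$ is a sum of $m$ i.i.d.\ variables, each being the count of returns to $l+1$ before absorption, which has mean $\approx a_{l+1}/a_l$ per excursion in the relevant $h$-transformed picture — but more simply, I would track the \emph{unconditioned} chain and extract the conditioning on avoiding $K$ at the end. Concretely, the probability that $m$ independent SRW excursions from $1\to 0$, \emph{run freely}, produce exactly the staircase profile $(n a_l)_l$ and avoid $K$ is, up to polynomial-in-$K$ and $e^{o(n/K)}$ corrections, of the form $\exp(-(n/K)\cdot c_K + \text{entropy terms})$; summing the leading exponential rate over the allowed profiles and using $n/K \approx -2(1-\delta)^3\ln\varepsilon$ together with $\sum_l (\text{rate at scale }l) \to $ the covering constant should yield both bounds. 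The cleanest route: show that $\PP(A^x)$ equals, up to $e^{o(n)}$, the probability that $n$ independent SRW from $1$ avoid $K$ \emph{and} realize the sub-profile, and that the dominant contribution is $\big(\text{prob.\ one excursion avoids }K\big)^{n}\times(\text{subexponential}) = (1-1/K)^{n}\times e^{o(n)} = \varepsilon^{2(1-\delta)^3 + o(1)}$; for the upper bound in \eqref{one_point} one drops all constraints except $\{\mathcal N_{\lceil\delta K\rceil}^x(\text{full}) \le n a_{\lceil\delta K\rceil+1}\}$ and the avoidance of $K$ at the terminal scale, which already costs $(1-1/K)^{n(1-\delta)^2+o(1)}$; tuning $K$ large makes the exponents $2-1.99\delta$ and $2-2.01\delta$. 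I would use a Cramér-type large-deviation bound (Chernoff) for the binomial/negative-binomial sums at each scale for the quantitative control, with the error terms absorbed into the $1.99$ vs $2.01$ slack.

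The main obstacle I anticipate is bookkeeping the interaction between the \emph{staircase constraints} at the lower scales and the \emph{avoidance constraint} at scale $K$: the event $A^x$ conditions the profile to stay \emph{below} $na_l$, which is the exact order of the conditioned-to-avoid-$K$ expectation, so the two are not independent and one must argue that forcing the profile low does not cost extra beyond the $(1-1/K)^n$ already paid — i.e., that the h-transform heuristic (conditioned expected count $\approx (1-(l+1)/K)^2$) is sharp enough that "staying at or below the conditioned mean" carries probability bounded below by a constant (or at worst $e^{o(n)}$). For the lower bound in \eqref{one_point}, I would handle this by a change-of-measure / second-moment argument on the single-scale increments, or by directly invoking reversibility of SRW to compute the joint law of the profile exactly as a product of negative-binomial weights and then bounding the relevant sum from below by its maximal term. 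For the upper bound, monotonicity lets me discard all but one constraint, so it is comparatively routine. The polynomial-in-$K$ prefactors and the $o_\varepsilon(1)$, $o_{r_1}(1)$ errors are harmless since $K$ is chosen large but fixed before $\varepsilon\downarrow 0$.
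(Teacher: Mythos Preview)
You have misread the structure of the event $A^x$, and this is the main gap. In the paper the argument of $\mathcal N^x_l(\cdot)$ in the definition of $A^x_l$ is the \emph{deterministic} number $n(1-l/K)^2$, not the random output $\mathcal N^x_{l-1}$ from the previous scale. Consequently $A^x_l$ depends only on the sequence of up/down decisions of the embedded SRW at site $l$ (namely, how many ``up'' steps occur before the $n(1-l/K)^2$-th ``down'' step), and these coin flips are independent across $l$ by the strong Markov property. Hence $\PP(A^x)=\prod_{l=\lceil\delta K\rceil}^{K-1}\PP(A^x_l)$ \emph{exactly}, with each $\mathcal N^x_l(m)$ a sum of $m$ i.i.d.\ geo$(1/2)$ variables. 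The ``main obstacle'' you anticipate --- interaction between staircase constraints and the avoidance of scale $K$ --- therefore does not exist, and the $h$-transform, change of measure, and second-moment machinery you propose are unnecessary.

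With the factorization in hand, the proof is a direct computation: Cram\'er's theorem gives $\PP(A^x_l)=\exp\bigl(-n(1-l/K)^2\,I\bigl((1-(l+1)/K)^2/(1-l/K)^2\bigr)+o_\varepsilon(n)\bigr)$ with $I$ the geo$(1/2)$ rate function; a Taylor expansion using $I(1)=I'(1)=0$, $I''(1)=1/2$ shows $j^2 I((1-1/j)^2)=1+o_j(1)$, and summing over $l$ yields $\PP(A^x)=\varepsilon^{2(1-\delta)(1+o_K(1))+o_\varepsilon(1)}$, which for $K$ large and $\varepsilon$ small sits between $\varepsilon^{2-1.99\delta}$ and $\varepsilon^{2-2.01\delta}$. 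Your sketched upper bound (``drop all but one constraint plus terminal avoidance'') would not recover this exponent: the single terminal event $A^x_{K-1}=\{\mathcal N^x_{K-1}(n/K^2)=0\}$ has probability $2^{-n/K^2}=\varepsilon^{O(1/K)}$, far too weak, and your $(1-1/K)^{n(1-\delta)^2}$ estimate refers to a different event than $A^x$. The full product over all scales is what produces the correct exponent $2-2\delta$.
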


\noindent Coming back to the lower bound, restricting to the set \mbox{$L^*_\varepsilon \defi L_\varepsilon \setminus B_{r_1}(W_0)$} yields that
\beq \bea \label{lowboundstart} 
\PP\left( \sup_{x\in \mathbbm{T}_2} T_{\varepsilon}\left(x\right) > \mathfrak t  \right) & \geq  \PP\left(\exists x\in L_\varepsilon^* \mbox{ such that } B_{\varepsilon}(x) \mbox{ is not hit up to time } \mathfrak t\right) \\
& \stackrel{\eqref{lowest}}{\geq} \PP\left(\mathcal{R} \mbox{ and }\exists x\in L_\varepsilon^* \mbox{ such that } \mathcal{R}^x \cap A^x  \right) \\
& \geq \frac{\E\left[\#\left\{x\in L_\varepsilon^* :\mathcal{R}^x \cap A^x \right\}\right]^2}{\E\left[\#\left\{x\in L_\varepsilon^* : \mathcal{R}^x \cap A^x \right\}^2\right]} - \PP(\mathcal{R}^c), 
\eea \eeq
by Paley-Zygmund inequality. Rotational invariance and strong Markovianity imply that $ \mathcal{R}^x$ and $A^x$ are independent, hence the above is {\it at least} 
\beq \label{ue}
\left[\sum\limits_{x\in L_\varepsilon^*} \PP\left(\mathcal{R}^x \right) \PP\left( A^x\right) \right]^2 /\left[ \sum\limits_{x,y\in L^*_\varepsilon}  \PP\left(A^x \cap A^y\right)\right]-\PP(\mathcal{R}^c). 
\eeq
We now analyse the denominator. First, remark that for $d_{\mathbb T_2}(x,y) > 2 r_{\lceil \de K \rceil -1}$,  the $A$-events decouple: in fact, they are rotationally invariant and depend on disjoint excursions, hence
the strong Markov property yields $\PP \left( A^x  \cap A^y \right) = \PP \left( A^x  \right) \PP\left( A^y \right)$.
Shortening
\[ \mathcal A \defi \sum\limits_{x\in L_\varepsilon^*}  \PP\left( A^x\right), \quad 
\mathcal B \defi \sum\limits_{x,y\in L_\varepsilon} \mathbbm{1}_{\{d_{\mathbbm{T}_2}\left(x,y\right) \leq 2 r_{\lceil \delta K \rceil -1 }\}} \PP\left(A^x \cap A^y\right), 
\]  
by Lemma \ref{regular} and the exact decoupling we thus have that 
\beq \bea \label{almost_done}
\eqref{ue} & \geq  \left(1-\varepsilon^\kappa\right)^2 \frac{\mathcal A^2}{ \mathcal A^2+ \mathcal B}-\varepsilon^\kappa \geq \left(1-\varepsilon^\kappa\right)^2 \left(1 -\frac{\mathcal B}{\mathcal A^2} \right) - \vare^\kappa\\
& \geq \left(1-\varepsilon^\kappa\right)^2 \left(1 -\frac{\mathcal B}{\varepsilon^{-3.96\delta}} \right) - \vare^\kappa,
\eea \eeq
the last step by Lemma \ref{one_two_point} and using that $\left|L_\varepsilon\right| \geq \varepsilon^{-2+0.01\delta}$. It thus remains to analyze the $\mathcal B$-term: by regrouping terms according to the distance, 
\beq \bea \label{blabla}
\mathcal B & \leq  \sum\limits_{i = \lceil \delta K\rceil -2}^K \;\sum\limits_{x,y\in L_\varepsilon} \mathbbm{1}_{\{d_{\mathbbm{T}_2}\left(x,y\right) \in \left[r_{i+1},r_i\right]\}} \PP\left(A^x \cap A^y\right).
\eea \eeq
To get a handle on the two-points probabilities appearing in \eqref{blabla}, we follow the recipe from \cite[Sec. 3.1.1  p. 97-98]{kistler}, exploiting the approximate hierarchical structure which underlies the excursion-counts, and which is best explained with the help of a picture, see Figure \ref{decoup_fig} below. First, the circles associated to $x,y$ on small scales $i$ (left) are almost identical and so are the excursion counts; this suggests that $A^x_i \cap A^y_i$ is well represented by $A^x_i$ alone. Dropping one of the events is an estimate by worst case scenario known in this context as "REM approximation". For larger $i$ (middle) this approximation is not sharp, but only a single scale can fall into this case as we can choose $\varepsilon$ arbitrarily small for given $K$. Choosing $K$ large makes the influence of a single scale comparatively small. For $i$ large (right), balls are disjoint, which by rotational invariance and strong Markovianity yields independent excursion counts. Such approximate tree-structure of excursion counts is summarized in the lower picture, with the red box corresponding to the scale at hand. 
\begin{figure}
\begin{center}
    \includegraphics[width=\textwidth]{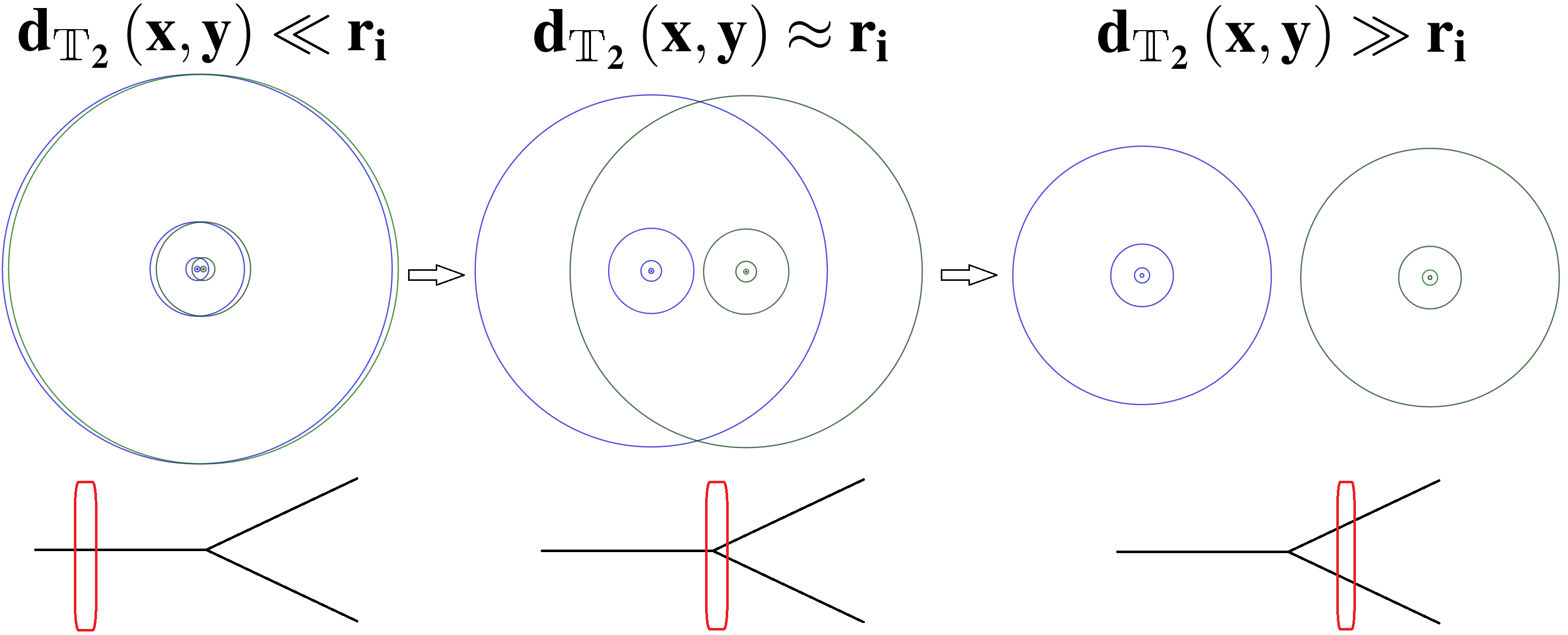}
\end{center}
\caption{Common branch on small scales (left) and decoupling on large scales (right).}
\label{decoup_fig}
\end{figure}
By these considerations, for $i \geq \lceil \delta K \rceil -2$ and $d_{\mathbbm{T}_2}\left(x,y\right) \in \left[r_{i+1},r_i\right]$, we write
\beq \bea \label{rem_approx_eq}
\PP\left(A^x \cap A^y\right) & = \PP\left( \bigcap\limits_{l=\lceil \delta K\rceil}^{K-1} A_l^x \cap  \bigcap\limits_{l=\lceil \delta K\rceil}^{K-1} A_l^y \right) \\
& \leq \PP\left( \bigcap\limits_{l=\lceil \delta K\rceil,l\neq i,i+1}^{K-1} A_l^x \cap  \bigcap\limits_{l= i+1}^{K-1} A_l^y \right) \qquad \text{("REM approximation")}\\
& = \prod\limits_{l=\lceil \delta K\rceil,l\neq i,i+1}^{K-1}\PP\left(  A_l^x \right)  \prod\limits_{l= i+1}^{K-1} \PP\left( A_l^y \right) \qquad \text{(exact decoupling)}\\
& \leq   \quad   \vare^{4-2.01 \de - 2 \frac{i+1}{K}}   \qquad \qquad \text{(Lemma \ref{one_two_point} / one-point estimates)} \,.
\eea \eeq
There are at most $2\varepsilon^{-4} \pi r_i^2$ pairs of points on $L_\varepsilon$ with distance at most $r_i$: using that $r_i \leq \varepsilon^{i/K}$, and \eqref{rem_approx_eq}  in \eqref{blabla} we get 
\beq 
\mathcal B \leq  \sum\limits_{i = \lceil \delta K\rceil -2}^K  2\pi \varepsilon^{-2.01\delta - \frac{4}{K}}\leq  \varepsilon^{-2.02\delta}\,.
\eeq
Applying this estimate to \eqref{almost_done} and putting $\hat{\delta} \defi 1- \left(1-\delta\right)^{4}$ we therefore see that
\beq \PP\left( \sup_{x\in \mathbbm{T}_2} T_{\varepsilon}\left(x\right) > \left(1-\hat\delta\right)\frac{2}{\pi}\left(\ln\varepsilon\right)^2 \right) \geq 1-
\vare^{\hat c},
\eeq
for  $\hat c \defi \frac{1}{2} \min\{ \kappa, 1.94\delta\}$, settling the lower bound of Theorem \ref{one_t}. 

\section{Proofs} 
\subsection{Hitting times and excursion-counts} \label{ex_sec}
The study of hitting times for BM is closely related to Green's functions. Estimates on the torus have however proofs which are either opaque or hard to find: we include here an elementary treatment based on Fourier analysis for the reader's convenience. 

\begin{lemma}\label{greenfunk}
The function 
\beq F\left(x,y\right) \defi G_x\left(y\right)- \frac{1}{2\pi} \ln d_{\mathbbm{T}_2}\left(x,y\right), \quad \text{where}\quad 
G_x\left(y\right) \defi -\sum\limits_{p\in 2\pi \mathbbm{Z}^2\setminus\{0\}}\frac{1}{\left|p\right|^2} e^{ip\left(x-y\right)}
\eeq
is bounded on $\mathbbm{T}_2^2 \setminus \{(x,x) : x\in \mathbbm{T}_2\}$.
\end{lemma}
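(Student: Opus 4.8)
The plan is to identify $G_x$, up to sign, with the centred Green's function of the Laplacian on $\mathbbm{T}_2$ and to subtract off the matching planar logarithmic singularity; the core of the argument is then a one-line comparison of distributional Laplacians plus elliptic regularity.

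First I would make precise the meaning of $G_x$. In dimension two $\sum_{p\in 2\pi\mathbbm{Z}^2\setminus\{0\}}|p|^{-4}<\infty$, so the defining series converges in $L^2(\mathbbm{T}_2)$ and $G_x\in L^2(\mathbbm{T}_2)\subset\mathcal{D}'(\mathbbm{T}_2)$. Differentiating term by term in $\mathcal{D}'(\mathbbm{T}_2)$ (using $-\Delta e^{ipz}=|p|^{2}e^{ipz}$) and invoking Poisson summation on the unit torus, $\sum_{p\in 2\pi\mathbbm{Z}^2}e^{ipz}=\delta_0$ in $\mathcal{D}'(\mathbbm{T}_2)$ (equivalently: every Fourier coefficient of $\delta_0$ equals $1$), one obtains
\[
-\Delta G_x \;=\; -\sum_{p\in 2\pi\mathbbm{Z}^2\setminus\{0\}} e^{ip(x-\cdot)} \;=\; 1-\delta_x .
\]
By translation invariance $G_x(y)=G_0(x-y)$ and $d_{\mathbbm{T}_2}(x,y)=d_{\mathbbm{T}_2}(0,x-y)$, so $F(x,y)=\varphi(x-y)$ with $\varphi(z)\defi G_0(z)-\tfrac{1}{2\pi}\ln d_{\mathbbm{T}_2}(0,z)$, and it is enough to prove that $\varphi$ is bounded on $\mathbbm{T}_2\setminus\{0\}$.

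Now comes the local analysis near $z=0$. On a small ball $B=B_\rho(0)$ with $\rho<\tfrac{1}{2}$, the function $z\mapsto\tfrac{1}{2\pi}\ln d_{\mathbbm{T}_2}(0,z)$ coincides with the planar fundamental solution and satisfies $\Delta\bigl(\tfrac{1}{2\pi}\ln d_{\mathbbm{T}_2}(0,\cdot)\bigr)=\delta_0$ in $\mathcal{D}'(B)$. Combined with the display above this gives $\Delta\varphi=(\delta_0-1)-\delta_0=-1$ in $\mathcal{D}'(B)$ — crucially this now holds on all of $B$, the point $0$ included, since the two Dirac masses cancel. Hence $\varphi+\tfrac14|\cdot|^{2}$ is harmonic on $B$ in the distributional sense and, being in $L^{1}_{\mathrm{loc}}(B)$, agrees a.e.\ with a $C^\infty$ function (Weyl's lemma; alternatively, one may stay fully elementary and invoke the mean-value characterization of harmonic functions applied to $\varphi+\tfrac14|\cdot|^{2}$). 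In particular $\varphi$ is bounded on $\overline{B_{\rho/2}(0)}$. Away from the diagonal, the same regularity statement applied on the open set $\mathbbm{T}_2\setminus\{0\}$, where $-\Delta G_0=1$ is smooth, shows $G_0\in C^\infty(\mathbbm{T}_2\setminus\{0\})$, hence bounded on the compact set $\mathbbm{T}_2\setminus B_{\rho/2}(0)$, where $\ln d_{\mathbbm{T}_2}(0,\cdot)$ is bounded as well. Putting the two regimes together, $\varphi$ is bounded on $\mathbbm{T}_2\setminus\{0\}$, which is the claim. (Here $G_x$ is understood as its a.e.-unique representative, smooth off the diagonal with a logarithmic singularity along it.)

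The only genuinely delicate ingredient is the distributional identity $-\Delta G_x=1-\delta_x$, i.e.\ Poisson summation on $\mathbbm{T}_2$, which is precisely the "elementary Fourier analysis" the section advertises; everything after that is the routine fact that an $L^{1}_{\mathrm{loc}}$ function with smooth distributional Laplacian is itself smooth. One should also double-check the trivial normalizations ($\Delta|z|^{2}=4$ in two dimensions, and the placement of the $2\pi$ in the fundamental solution), but these are bookkeeping rather than obstacles.
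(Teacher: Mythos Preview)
Your argument is correct and takes a genuinely different route from the paper. The paper proceeds by a direct, hands-on estimate of the Fourier series: it splits $\sum_{p\neq 0}|p|^{-2}e^{ipz}$ at the cutoff $|p|=|z|^{-1}$, controls the high-frequency tail by an Abel-type shift trick, replaces $\cos(pz)$ by $1$ on the low-frequency part, and finally compares $\sum_{|p|\le |z|^{-1}}|p|^{-2}$ to $-\tfrac{1}{2\pi}\ln|z|$ by grouping lattice points into annuli. Your approach instead computes the distributional Laplacian of each of the two summands, observes that the Dirac masses cancel so that $\Delta\varphi=-1$ across the origin, and then invokes Weyl's lemma to conclude smoothness and hence boundedness. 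Your argument is shorter and more conceptual---it makes transparent \emph{why} the logarithm is the correct subtraction---but it imports Weyl's lemma (equivalently, elliptic regularity) as a black box. The paper's proof is more laborious but entirely self-contained at the level of calculus and elementary estimates, which is precisely the ``elementary treatment based on Fourier analysis'' it advertises; no distribution theory is needed. Both are valid; which to prefer depends on whether one values self-containedness or brevity.
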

\begin{proof}
It suffices to consider $y$ in a small neighborhood of $x$, as otherwise the result is trivial. So let $z \defi x-y$ and assume that $2|z_1| \geq |z|$ (swapping coordinates otherwise). We have 
\beq  \left|\sum\limits_{\substack{p\in 2\pi \mathbbm{Z}^2\setminus\{0\}  \\ |p|>|z|^{-1}}}\frac{1}{\left|p\right|^2} e^{ipz} \right| =   \left|\sum\limits_{\substack{p\in 2\pi \mathbbm{Z}^2\setminus\{0\}  \\ |p|>|z|^{-1}}}\frac{1}{1-e^{i 2\pi z_1}}\frac{1}{\left|p\right|^2} \left(e^{ipz}-e^{i(p+( 2\pi,0))z}\right) \right|. \eeq
Shifting the difference from the exponential to $|p|^{-2}$ by collecting terms with the same exponent, and by the triangle inequality, one obtains boundedness uniformly over $z\neq 0$ in a small enough neighborhood of $0$. The extra terms due to the boundary of the summation domain are easily shown to be bounded.  By combining the summand $p$ and $-p$ we see that sums of this form are real valued. Therefore 
\beq \sum\limits_{\substack{p\in 2\pi \mathbbm{Z}^2\setminus\{0\}  \\ |p| \leq |z|^{-1}}} \frac{1}{\left|p\right|^2} e^{ipz} = \sum\limits_{\substack{p\in 2\pi \mathbbm{Z}^2\setminus\{0\}  \\ |p|\leq |z|^{-1}}}\frac{1}{\left|p\right|^2} \cos(pz). \eeq
Since $|pz|\leq 1$ for all summands contained in this sum we can estimate $\cos(x) \leq 1- x^2/4$. Hence 
 \beq \left| G_x\left(y\right)-  \sum\limits_{\substack{p\in 2\pi \mathbbm{Z}^2\setminus\{0\}  \\ |p|\leq |z|^{-1}}}\frac{1}{\left|p\right|^2}\right|  \eeq
is uniformly bounded for $y$ in a small neighborhood of $x$. The claim of Lemma \ref{greenfunk} then follows by rearranging summands into groups $C_j \defi \{p\in 2\pi \mathbbm{Z}^2\setminus\{0\}: |p|^2 \in \left((j-1)^3,j^3\right]\}$, estimating $|p|^{-2}$ by best/worst case scenario within each group, and using that $|C_j| = \frac{3}{4\pi} j^2 + O(j^{3/2})$.
\end{proof}

\begin{proof}[Proof of Proposition \ref{expectations}: first moment of hitting times.]
Let $\mu(y)\defi \E_y\left[\tau_{B_r\left(x\right)}\right]$. 
For $\Delta$ the Laplacian with periodic boundary condition on $\mathbbm{T}_2$ we have Poisson's equation $\Delta\mu = -2$ on $\mathbbm{T}_2\setminus B_{r}\left(x\right)$ with $\mu = 0$ on $B_{r}\left(x\right)$. 
Plainly,
\beq  G_x\left(y\right) \defi -\sum\limits_{p\in 2\pi \mathbbm{Z}^2\setminus\{0\}}\frac{1}{\left|p\right|^2} e^{ip\left(x-y\right)}\eeq
is a Green function, i.e. solution of $\Delta G_x = 1 - \delta_x$ on the torus. In particular, $\mu +2G_x$ is harmonic on $\mathbbm{T}_2\setminus B_{r}\left(x\right)$. By the maximum principle, and since $\mu\defi 0$  on $\partial B_{r}\left(x\right)$,  
\beq 2 \inf\limits_{z\in \partial B_{r}\left(x\right)}G_x\left(z\right) \leq \mu\left(y\right) +2G_x\left(y\right) \leq 2\sup\limits_{z\in \partial B_{r}\left(x\right)}G_x\left(z\right)\eeq
holds. It follows from Lemma \ref{greenfunk} that $\mu\left(y\right) - \frac{1}{\pi}\ln[ d_{\mathbbm{T}_2}\left(x,y\right)/r]$ is bounded, and the first claim \eqref{expec1} is proved. The second claim \eqref{expec2} is elementary as we can identify the ball on $\mathbbm{T}_2$ with the ball in $\R^2$ and exploit rotational invariance to solve Poisson's equation explicitly.
\end{proof}

\begin{proof}[Proof of Proposition \ref{time_to_exc}: concentration of excursion-counts.]
By Kac's moment formula \cite{Kac}, 
\beq 
\E_x\left[\tau_A^i\right]\leq i! \sup\limits_{x\in \mathbbm{T}} \E_x\left[\tau_A \right]^i , \qquad A\subset\mathbbm{T} \; \text{closed}. 
\eeq 
By monotone convergence, Taylor-expanding the exponential function, and by the above estimate,
\beq \label{pos_exp_mom}  \E_x\left[e^{\theta \tau_A} \right] \leq 1 + \theta  \E_x\left[\tau_A\right] + \sum\limits_{i=2}^{\infty} \left(\theta  \sup\limits_{x\in \mathbbm{T}} \E_x\left[\tau_A \right] \right)^i \leq \exp\left(\theta  \E_x\left[\tau_A\right] +2\theta^2 \sup\limits_{x\in \mathbbm{T}} \E_x\left[\tau_A \right]^2  \right)\eeq 
for $0< \theta <\frac{1}{2} \left(\sup\limits_{x\in \mathbbm{T}} \E_x\left[\tau_A \right]\right)^{-1}$. Using $e^{-x} \leq 1-x+x^2$ for positive $x$ gives
\beq \label{neg_exp_mom} \E_x\left[e^{-\theta \tau_A} \right] \leq 1 - \theta  \E_x\left[\tau_A\right] +  \theta^2  \sup\limits_{x\in \mathbbm{T}} \E_x\left[\tau_A \right]^2 \leq \exp\left( - \theta  \E_x\left[\tau_A\right] +  \theta^2  \sup\limits_{x\in \mathbbm{T}} \E_x\left[\tau_A \right]^2\right).\eeq 
Consider $\tau^{\left(i\leftarrow\right)}$ the time it takes $W$ to get from $ \partial B_{r_1}\left(x\right)$ to $ B^c_{r_0}\left(x\right)$ the $i-th$ time; $\tau^{i\rightarrow}$ the time $W$ needs to get from  $ \partial B_{r_0}\left(x\right)$ to $ B_{r_1}\left(x\right)$ the $i$-th time after $ B_{r_1}\left(x\right)$ has been hit the first time and $\tau_{r_1}$ the time it takes $W$ to get from the starting point to $ \partial B_{r_1}\left(x\right)$. Now by definition we have
\beq\label{decomp_extime} D_n\left(x\right) = \tau_{r_1}+\sum\limits_{i=1}^{n-1} \tau^{\left(i\rightarrow\right)} +\sum\limits_{i=1}^{n} \tau^{\left(i\leftarrow\right)}.\eeq
Exponential Markov inequality gives for any $t,\theta >0$ 
\beq \PP\left(D_n\left(x\right) \geq t \right) \leq e^{-\theta t}\E\left[e^{\theta D_n\left(x\right)}\right]  \eeq
Using \eqref{decomp_extime}, by strong Markovianity and estimating by worst starting points this is 
\beq \leq \label{estim_expo_mom}  e^{-\theta t}\left(\sup\limits_{z\in\mathbbm{T}_2}\E_z\left[e^{\theta \tau_{r_1}}\right]\right)\left(\sup\limits_{z\in B_{r_0}\left(x\right)}\E_z\left[e^{\theta \tau^{\left(1\rightarrow\right)}}\right]\right)^{n-1}\left(\sup\limits_{z\in B_{r_1}\left(x\right)}\E_z\left[e^{\theta \tau^{(1\leftarrow)}}\right]\right)^{n} \eeq
Using \eqref{pos_exp_mom} with $\theta = -\frac{\pi \delta}{4\ln r_1 }$, and applying Proposition \ref{expectations}, we obtain
\beq \bea 
 &\;\;\sup\limits_{z\in\mathbbm{T}_2}\E_z\left[e^{\theta \tau_{r_1}}\right] &\leq&\; e^{\frac{\delta}{4} +\frac{\delta^2}{8}+ o_{r_1}\left(1\right)} \\
 &\sup\limits_{z\in B_{r_0}\left(x\right)}\E_z\left[e^{\theta \tau^{\left(1\rightarrow\right)}}\right]^{n-1}&\leq& \; e^{\left(n-1\right)\left(\frac{\delta}{4} +\frac{\delta^2}{8}+ o_{r_1}\left(1\right)\right)}\\
\mbox{and} &\sup\limits_{z\in B_{r_1}\left(x\right)}\E_z\left[e^{\theta \tau^{\left(1\leftarrow\right)}}\right]^n&\leq& \; e^{n o_{r_1}\left(1\right)}. \eea \eeq
With $t= \left(1+\delta\right) n \frac{1}{\pi}\ln\frac{r_0}{r_1}$, and by the above estimates, \eqref{estim_expo_mom} reads
\beq  \PP\left(D_n\left(x\right) \geq \left(1+\delta\right) n \frac{1}{\pi}\ln\frac{r_0}{r_1} \right) \leq e^{-n\left(\frac{\delta}{4} + \frac{\delta^2}{4}+o_{r_1}\left(1\right)\right)}e^{n\left(\frac{\delta}{4} + \frac{\delta^2}{8}+ o_{r_1}\left(1\right)\right)},\eeq
settling \eqref{time_to_exc_eq1}. As for \eqref{time_to_exc_eq2}: for any $n\in \N$ and $\theta>0$ we have
\beq \PP\left(D_n\left(x\right) \leq t \right) \leq e^{\theta t} \E e^{-\theta D_{n}\left(x\right)} \leq e^{\theta t} \E\left[ e^{-\theta  \tau^{\left(1\rightarrow\right)} }\right]^{n-1} . \eeq
Choosing $\theta = \frac{\pi \delta}{2 \ln r_1}$ and $t=   \left(1-\delta\right) n \frac{1}{\pi}\ln\frac{r_0}{r_1}$, applying \eqref{neg_exp_mom} together with Proposition \ref{expectations} yields the second claim and concludes the proof of Proposition \ref{time_to_exc}. 
\end{proof}

\subsection{Estimates for $\mathcal R$ and $A$}  \label{proofs_lemmata_lb}

\begin{proof}[Proof of Lemma \ref{regular}]
For $x\in L^*_\varepsilon$, $\left\{ \tau_{r_1} < \tau_{r_K} \right\}$ almost surely. By rotational invariance and strong Markovianity, the number of excursions from  scale $\lceil\delta K \rceil$ to scale $\lceil\delta K \rceil -1$ in different excursions from scale $1$ to scale $0$ are independent of each other. 
The number of excursions from scale $\lceil\delta K \rceil$ to scale $\lceil\delta K \rceil -1$ in one excursion from scale $1$ to scale $0$ is distributed like the product of a Bernoulli distributed and an independent geometrically distributed random variable, both with parameter $\lceil\delta K \rceil^{-1}$. (This product has expectation $1$). By Cram\'{e}r's theorem, 
\beq\bea 
& \PP\left( \mbox{more than } \mathfrak n(2) \mbox{ times } \lceil \delta K \rceil \rightarrow \lceil \delta K \rceil-1  \mbox{ in the first} \; \mathfrak n(3) \mbox{ excursions } 1\rightarrow 0  \right) \\
& \qquad \qquad \leq \exp\left(- \mathfrak n(3) I\left(\frac{1}{1-\delta}\right)\right) = \varepsilon^{2K\left(1-\delta\right)^{3} I\left(\frac{1}{1-\delta}\right)},\eea\eeq
with $I$ the rate function of a Bernoulli($1/ \lceil \delta K \rceil$ )$\times$ geometric($1/\lceil\delta K \rceil$).
It follows that $\PP\left(\left(\mathcal{R}^x\right)^c\right)$ vanishes polynomially in $\varepsilon$ for fixed $\delta$ and $K$. Taking the complement yields the first claim. 

By Proposition \ref{time_to_exc} we have
\beq \PP\left(D_{\mathfrak n(3)}\left(x\right) \leq \mathfrak t \right) \leq \varepsilon^{2K\left(1-\delta\right)^3 \left(\delta^2/4+o_{r_1}\left(1\right)\right)},\eeq 
which vanishes faster then, say, $\varepsilon^3$ for $K$ sufficiently large. The second claim thus follows by union bound over all $x\in L_\varepsilon$ on the complements. 
\end{proof}

\begin{proof}[Proof of Lemma \ref{one_two_point}]
The number of times a SRW goes from $l$ to $l+1$ before going from $l$ to $l-1$ is geo$(1/2)$-distributed. Therefore $\mathcal{N}^x_{l}\left(n\right)$ is, by strong Markovianity and rotational invariance, the sum of $n$ independent geo($1/2$)-distributed r.v.'s.  Hence by Cram\'{e}r's theorem 
\beq \bea\PP\left(A^x\right) = \prod\limits_{l=\lceil\delta K\rceil}^{K-1}\PP\left( A^x_l \right) &= &\prod\limits_{l=\lceil\delta K\rceil}^{K-1}\exp\left(-n \left(1-\frac{l}{K}\right)^2 I\left( \frac{  \left(1-\frac{l+1}{K}\right)^2}{ \left(1-\frac{l}{K}\right)^2}\right)+o_{\varepsilon}\left(n\right)\right)\\  &=& \exp\left(-\frac{n}{K^2}\sum_{l=\lceil\delta K\rceil}^{K-1} \left(K-l\right)^2 I\left(\left(1-\frac{1}{K-l}\right)^2\right)+o_{\varepsilon}\left(n\right)\right),\eea\eeq 
where $I\left(x\right)= x\ln\left(x\right) - \left(1+x\right)\ln\left(\frac{1+x}{2}\right)$ is the geo($1/2$)-rate function. Using $I\left(1\right) = I'\left(1\right) = 0$ and $I''\left(1\right)=\frac{1}{2}$ one quickly obtains $j^2 I\left(\left(1-1/j\right)^2\right) = 1 + o_j\left(1\right)$ as $j\rightarrow \infty$, and therefore
\beq \label{A_computations}\PP\left( A^x\right)  =\exp\left(-\frac{n}{K}\left(1-\delta\right)\left(1+ o_{K}\left(1\right)\right)+o_{\varepsilon}\left(n\right)\right) =\varepsilon^{2\left(1-\delta\right)\left(1+ o_{K}\left(1\right)\right) + o_\varepsilon\left(1\right)}, 
\eeq
concluding the proof of the Lemma. 
\end{proof}

\ackno{I am grateful to  David Belius, Giuseppe Genovese, Nicola Kistler, Benjamin Schlein and Tobias Weth for enlightening conversations.}


\begin{thebibliography}{99}
\bibitem{belius_kistler} D. Belius, and N. Kistler, \emph{The subleading order of two dimensional cover times}, Probability  Theory and Related Fields (2016)
\bibitem{dembo} A. Dembo, \emph{ Simple random covering, disconnection, late and favorite points.} Proceedings of the International Congress of Mathematicians, Madrid. Vol. 3. (2006).
\bibitem{DPRZ} A. Dembo, Y. Peres, J. Rosen, and O. Zeitouni, \emph{Cover times for Brownian motion and random walks in two dimensions}. Annals of Mathematics (2) Vol. 160 (2004): 433-464. 
\bibitem{Kac} P.J. Fitzsimmons, and J. Pitman, \emph{Kac's moment formula and the Feynman-Kac formula
for additive functionals of a Markov process}, Stochastic Processes and their Applications, Vol. 79 (1999): 
117-134.
\bibitem{kistler} N. Kistler, \emph{Derrida's Random Energy Models: From Spin Glasses to the extremes of correlated random fields}, in  \emph{Correlated random systems. Five different methods.}, V. Gayrard and N. Kistler (eds), Springer Lecture Notes in Mathematics, Vol. 2143 (2015)
\bibitem{ray} D. Ray, \emph{Sojourn times and the exact Hausdorff measure of the sample path for planar Brownian motion.} Transactions of the American Mathematical Society 106 (1963): 436-444.
\end{thebibliography}
\end{document}